\newtheorem{theorem}{Theorem}[section]
\newtheorem{proposition}[theorem]{Proposition}
\newtheorem{lemma}[theorem]{Lemma}
\newtheorem{corollary}[theorem]{Corollary}
\theoremstyle{definition}
\newtheorem{definition}[theorem]{Definition}
\newtheorem{remark}[theorem]{Remark}
\theoremstyle{plain}
\numberwithin{equation}{section}
\newtheorem{theorem*}{Theorem}
\newtheorem{proposition*}[theorem*]{Proposition}
\newtheorem{corollary*}[theorem*]{Corollary}
\def \Z{\mathbb Z}
\newcommand{\Mod}{\mathrm{Mod}}
\newcommand{\UMod}{\mathrm{UMod}}
\newcommand{\Homeo}{\mathrm{Homeo}}
\newcommand{\LMod}{\mathrm{LMod}}
\renewcommand{\S}{\mathbb{S}}
\newcommand{\I}{\mathcal{I}}
\renewcommand{\O}{\mathcal{O}}
\newcommand{\C}{\mathcal{C}}
\newcommand{\D}{\mathcal{D}}
\newcommand{\T}{\mathcal{T}}
\newcommand{\Sp}{\mathrm{Sp}}
\newcommand{\SL}{\mathrm{SL}}
\newcommand{\Stab}{\mathrm{Stab}}
\numberwithin{equation}{subsection}
\begin{document}
\title[Liftable mapping class groups of regular cyclic covers]{Liftable mapping class groups of \\
regular cyclic covers}

\author[N. Agarwal]{Nikita Agarwal}
\address{Department of Mathematics\\
Indian Institute of Science Education and Research Bhopal\\
Bhopal Bypass Road, Bhauri \\
Bhopal 462 066, Madhya Pradesh\\
India}
\email{nagarwal@iiserb.ac.in}
\urladdr{https://sites.google.com/view/nagarwal/home}

\author[S. Dey]{Soumya Dey}
\address{The Institute of Mathematical Sciences \\
IV Cross Road, CIT Campus \\
Taramani \\
Chennai 600 113 \\
Tamil Nadu, India.}
\email{soumya.sxccal@gmail.com}
\urladdr{https://sites.google.com/site/soumyadeymathematics/}

\author[N. K. Dhanwani]{Neeraj K. Dhanwani}
\thanks{The third author was supported by a UGC fellowship.}
\address{Department of Mathematics\\
Indian Institute of Science Education and Research Bhopal\\
Bhopal Bypass Road, Bhauri \\
Bhopal 462 066, Madhya Pradesh\\
India}
\email{nkd9335@iiserb.ac.in}

\author[K. Rajeevsarathy]{Kashyap Rajeevsarathy}
\address{Department of Mathematics\\
Indian Institute of Science Education and Research Bhopal\\
Bhopal Bypass Road, Bhauri \\
Bhopal 462 066, Madhya Pradesh\\
India}
\email{kashyap@iiserb.ac.in}

\urladdr{https://home.iiserb.ac.in/$_{\widetilde{\phantom{n}}}$kashyap/}

\subjclass[2000]{Primary 57M60; Secondary 57M50, 57M99}

\keywords{surface; regular covers; liftable mapping class; normal series}
\begin{abstract}
Let $\Mod(S_g)$ be the mapping class group of the closed orientable surface of genus $g \geq 1$. For $k \geq 2$, we consider the standard $k$-sheeted regular cover $p_k: S_{k(g-1)+1} \to S_g$, and analyze the liftable mapping class group $\LMod_{p_k}(S_g)$ associated with the cover $p_k$. In particular, we show that $\LMod_{p_k}(S_g)$ is the stabilizer subgroup of $\Mod(S_g)$ with respect to a collection of vectors in $H_1(S_g,\Z_k)$, and also derive a symplectic criterion for the liftability of a given mapping class under $p_k$. As an application of this criterion, we obtain a normal series of $\LMod_{p_k}(S_g)$, which generalizes a well known normal series of congruence subgroups in $\SL(2,\Z)$. Among other applications, we describe a procedure for obtaining a finite generating set for $\LMod_{p_k}(S_g)$ and examine the liftability of certain finite-order and pseudo-Anosov mapping classes. 
\end{abstract}
\maketitle

\section{Introduction}
\label{sec:intro}
Let $S_g$ be the closed connected orientable surface of genus $g \ge 1$, and let $\Mod(S_g)$ be the mapping class group  of $S_g$. For $k \ge 1$ and $g_k := k(g-1)+1$,  let $p_k : S_{g_k} \to S_g$ be the standard regular $k$-sheeted cover of $S_g$ induced by a $\Z_k$-action on $S_{g_k}$ generated by a $2\pi/k$ free rotation of $S_{g_k}$, as shown in Figure~\ref{fig:Sgk_free_hom} below (for $k =8$). 
\begin{figure}[h]
		\labellist
		\small
		\pinlabel $2\pi/8$ at 285 310
		\pinlabel $b_1'$ at 120 288
		\pinlabel $a_1'$ at 155 245
		\pinlabel $b_{1,2}$ at 232 255
		\pinlabel $b_{1,g}$ at 296 261
		\pinlabel $a_{1,2}$ at 207 242
		\pinlabel $a_{1,g}$ at 270 242
		\pinlabel $a_1$ at 57 35
		\pinlabel $c_1$ at 90 55
		\pinlabel $b_1$ at 39 56
		\pinlabel $a_2$ at 122 35
		\pinlabel $b_2$ at 145 54
		\pinlabel $a_g$ at 218 35
		\pinlabel $b_g$ at 237 52
		\endlabellist
		\centering
		\includegraphics[width=50ex]{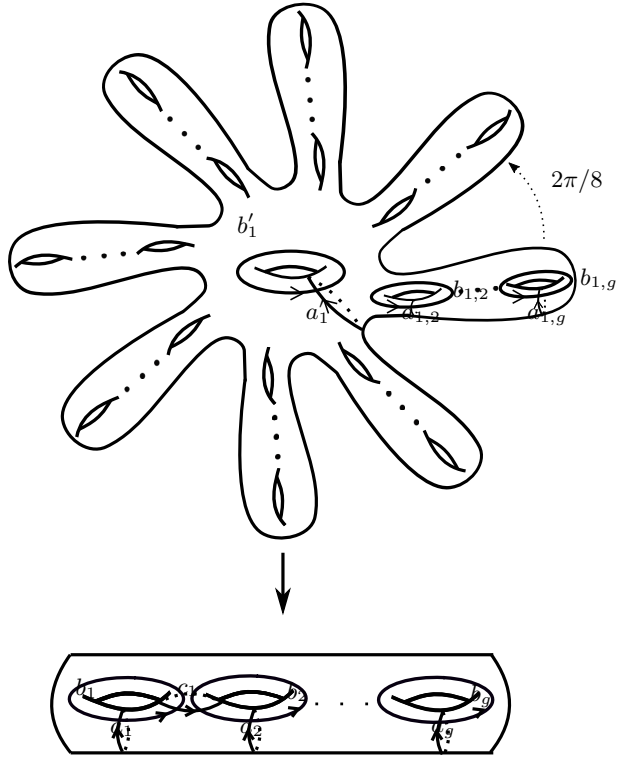}
		\caption{A $2\pi/8$ rotation of $S_{g_{8}}.$}
		\label{fig:Sgk_free_hom}
	\end{figure}
\noindent Let $\LMod_{p_k}(S_g)$ denote the liftable mapping class group of the cover $p_k$. The study of the liftable (and symmetric) mapping class groups was initiated by Birman-Hilden~\cite{BH1,BH2,BH3} in the 1970s with a view towards deriving presentations for mapping class groups.  More recently, these groups have been analyzed for balanced superelliptic and branched cyclic covers~\cite{GW2,GW1}, and braid groups~\cite{MP}. In our setup, $[\Mod(S_g):\LMod_{p_k}(S_g)] < \infty$, as this index is bounded above by the number of epimorphisms $H_1(S_g, \Z) \to \Z_k$, which is finite. Consequently, $\LMod_{p_k}(S_g)$ is finitely generated.

Let the standard generators of $H_1(S_{g_k},\Z)$ and $H_1(S_g,\Z)$ be represented by the the collections of simple closed curves
\begin{gather*} 
\{ a_1',b_1', a_{i,j}, b_{i,j} : 1 \le i \le k \text{ and } 2 \le j \le g \}
\text{ and } 
\{ a_i,b_i : 1 \le i \le g\}, 
\end{gather*} 
respectively, as indicated in Figure~\ref{fig:Sgk_free_hom} below. Let $f_{\ast}$ and $f_{\#}$ denote the homomorphisms induced by a map $f$ on the fundamental and first homology groups, respectively. By identifying our covering space $S_{g_k}$ with an appropriate subgroup $(p_k)_{\ast}(\pi_1(S_{g_k})) < \pi_1(S_g)$, we will assume that the induced map $(p_k)_{\#}: H_1(S_{g_k},\Z) \to H_1(S_g,\Z)$ is given by
\begin{equation}
\label{eqn:psharp}
\begin{aligned}
   a_{i,j} \mapsto a_j,& \text{ for } 1 \le i \le k \text{ and } 2 \le j \le g, \\
   b_{i,j} \mapsto b_j, &\text{ for } 1 \le i \le k \text{ and } 2 \le j \le g, \\
   a'_1 \mapsto a_1, & \text{ and } b'_{1} \mapsto b_1^k &.
\end{aligned}
\end{equation}
We will denote the homology classes $a_1,b_1, \ldots,a_g,b_g \in H_1(S_g,\Z) (\cong \Z^{2g})$ by the standard unit vectors $e_1, \ldots, e_{2g}$, respectively. Let $\Psi: \Mod(S_g) \to \Sp(2g, \Z)$ be the symplectic representation induced by the action of $\Mod(S_g)$ on $H_1(S_g, \Z)$, and let $\Psi_k$ denote the composition of the map $\Psi$ with the modulo $k$ reduction map $\Sp(2g, \Z) \to \Sp(2g, \Z_k)$. Since $\Mod(S_g)$ acts transitively on the nonseparating simple closed curves in $S_g$, it acts transitively on the primitive vectors in both $H_1(S_g,\Z)$ and $H_1(S_g,\Z_k)$. 

In Section~\ref{sec:sympl_crit}, we apply basic covering space theory to obtain a complete characterization of $\LMod_{p_k}(S_g)$ in terms of its image under both $\Psi$ and $\Psi_k$. 

\begin{theorem*}\label{thm:intro_main}
Given an $f \in \Mod(S_g)$, the following statements are equivalent.
\begin{enumerate}[(i)]
\item $f \in \LMod_{p_k}(S_g)$.
\item $f \in \Stab_{\Mod(S_g)}(\{\ell e_1:\ell \in \Z_k^{\times}\})$, where $e_1 \in H_1(S_g,\Z_k)$.
\item$\Psi(f) = (d_{ij})_{2g \times 2g}$, where $k|d_{2i}$, for $1\leq i\leq 2g$ and $i \neq 2$, and $d_{22} \in \Z_k^{\times}$.
\end{enumerate}
\end{theorem*}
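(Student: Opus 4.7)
My plan is to prove (i) $\Leftrightarrow$ (iii) via elementary covering space theory, and then to prove (ii) $\Leftrightarrow$ (iii) by exploiting the fact that $\Psi(f) \in \Sp(2g,\Z)$ preserves the symplectic form.

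For the covering-theoretic equivalence, I first identify the classifying epimorphism of $p_k$. As $p_k$ is a regular $\Z_k$-cover, the subgroup $(p_k)_{\ast}(\pi_1(S_{g_k}))$ is the kernel of a surjection $\phi \colon \pi_1(S_g) \twoheadrightarrow \Z_k$, and because $\Z_k$ is abelian, $\phi$ factors through the Hurewicz map as $\bar{\phi} \colon H_1(S_g,\Z) \twoheadrightarrow \Z_k$. From (1.1), the image of $(p_k)_{\#}$ is generated by $k\, e_2$ together with all $e_i$ for $i \ne 2$; hence $\bar\phi$ is the mod-$k$ reduction of the coordinate functional $v \mapsto v_2$. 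The standard lifting criterion for covers says $f$ lifts through $p_k$ if and only if $f_{\ast}(\ker \phi) = \ker \phi$, and since $\mathrm{Aut}(\Z_k) = \Z_k^{\times}$, this in turn is equivalent to the existence of $\ell \in \Z_k^{\times}$ with $\bar\phi \circ f_{\#} \equiv \ell\, \bar\phi \pmod k$. Evaluating both sides at the basis vector $e_j$, the left side equals the $(2,j)$-entry $d_{2j}$ of $\Psi(f)$ reduced modulo $k$, while the right side equals $\ell\,\delta_{j,2}$; this is exactly the condition in (iii), giving (i) $\Leftrightarrow$ (iii).

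For (iii) $\Leftrightarrow$ (ii), I will use that $\Psi_k(f) \in \Sp(2g,\Z_k)$ preserves the intersection form $\omega$, normalized by $\omega(e_{2i-1}, e_{2i}) = 1$. Since $\omega(e_1, v)$ computes the $e_2$-coefficient of $v$, the second-row entries satisfy
\[
d_{2j} \equiv \omega\bigl(e_1,\, \Psi_k(f)\, e_j\bigr) \;=\; \omega\bigl(\Psi_k(f)^{-1} e_1,\, e_j\bigr) \pmod k.
\]
Hence (iii) is equivalent to the equality of functionals $\omega(\Psi_k(f)^{-1} e_1, \,\cdot\,) \equiv \ell\, \omega(e_1, \,\cdot\,) \pmod k$, and non-degeneracy of $\omega$ on $H_1(S_g,\Z_k)$ forces $\Psi_k(f)^{-1}(e_1) = \ell\, e_1$; equivalently, $\Psi_k(f)(e_1) = \ell^{-1} e_1$ with $\ell^{-1} \in \Z_k^{\times}$. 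Since $\Psi_k(f)$ is $\Z_k$-linear, sending $e_1$ to a unit multiple of itself is precisely the setwise stabilization of $\{m\, e_1 : m \in \Z_k^{\times}\}$ appearing in (ii).

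The principal thing to handle carefully is the duality swap between the row-type condition (iii), which emerges naturally from covering theory as a condition on $\bar\phi$ (aligned with the $e_2$-coordinate), and the column-type condition (ii) stated in terms of $e_1$; this swap is mediated entirely by the symplectic pairing $\omega(e_1, e_2) = 1$. Once $\bar\phi$ is correctly identified with the second-coordinate functional from (1.1), the remaining arguments are routine bookkeeping in $\Sp(2g, \Z_k)$.
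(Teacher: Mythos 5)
Your proof is correct. The half establishing (i) $\Leftrightarrow$ (iii) is essentially the argument in the paper: both identify $(p_k)_{\#}(H_1(S_{g_k},\Z)) = \langle e_1, ke_2, e_3,\ldots,e_{2g}\rangle$ as the kernel of the classifying functional $\bar{\varphi}$ (the second coordinate mod $k$) and translate preservation of that kernel into the row condition on $\Psi(f)$. Where you genuinely diverge is in how you reach the stabilizer statement (ii). The paper gets it from (i) directly, by setting up a bijective correspondence between regular $\Z_k$-covers, epimorphisms $\bar{\varphi}\colon H_1(S_g)\to\Z_k$ up to $\mathrm{Aut}(\Z_k)$, and unit-multiple classes $\{\ell v\}$ of primitive vectors, and then invoking transitivity of the $\Mod(S_g)$-action on primitive vectors to identify $\LMod_{p_k}(S_g) = \Stab_{\Mod(S_g)}(p_k) = \Stab_{\Mod(S_g)}(\{\ell e_1\})$. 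You instead deduce (ii) $\Leftrightarrow$ (iii) by a direct linear-algebra computation: writing $d_{2j} \equiv \omega(e_1, \Psi_k(f)e_j) = \omega(\Psi_k(f)^{-1}e_1, e_j)$ and using non-degeneracy of $\omega$ over $\Z_k$ to convert the row condition into $\Psi_k(f)(e_1) = \ell^{-1}e_1$. Both routes secretly use the same duality (the paper's correspondence between epimorphisms and primitive vectors is Poincar\'e duality in disguise), but yours is more self-contained and makes the row-versus-column swap explicit, while the paper's buys additional global information — the transitive action on the set of covers — which it reuses later to compute the index $[\Mod(S_g):\LMod_{p_k}(S_g)]$ in terms of counting primitive vectors. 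Your argument proves exactly the stated equivalences but would need the paper's correspondence anyway to recover that counting application.
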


\noindent The following corollary, which draws inspiration from \cite[Theorem 5.2.2]{TG}, is an immediate application of Theorem~\ref{thm:intro_main}.

\begin{corollary*}
For $g \geq 1$ and $k \geq 2$, we have 
$$\LMod_{p_k}(S_g) \cap \LMod_{p_{\ell}}(S_g) = \LMod_{p_d}(S_g),$$ where
$d = \text{lcm}(k,\ell)$.
\end{corollary*}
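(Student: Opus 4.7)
The plan is to deduce the corollary directly from characterization (iii) of the main theorem, reducing everything to two elementary number-theoretic facts about $\text{lcm}$. Fix $f \in \Mod(S_g)$ and write $\Psi(f) = (d_{ij})_{2g \times 2g}$. By the theorem, membership of $f$ in $\LMod_{p_m}(S_g)$ is determined solely by the second row of $\Psi(f)$: namely, $m \mid d_{2i}$ for all $i \neq 2$ and $d_{22} \in \Z_m^{\times}$.

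I would then observe the two arithmetic equivalences:
\begin{enumerate}[(a)]
\item For any integer $n$, $k \mid n$ and $\ell \mid n$ if and only if $d \mid n$, where $d = \text{lcm}(k,\ell)$.
\item For any integer $x$, $\gcd(x,k) = 1$ and $\gcd(x,\ell) = 1$ if and only if $\gcd(x,d) = 1$, since $k$, $\ell$, and $d = \text{lcm}(k,\ell)$ share the same set of prime divisors.
\end{enumerate}

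Applying (a) to each entry $d_{2i}$ with $i \neq 2$ and (b) to $d_{22}$, I obtain that the simultaneous conditions defining $\LMod_{p_k}(S_g) \cap \LMod_{p_\ell}(S_g)$ coincide with those defining $\LMod_{p_d}(S_g)$. This yields both inclusions at once, finishing the argument.

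There is no real obstacle here; the only minor point to be careful about is that the $(2,2)$-entry condition involves units rather than divisibility, so it must be handled via (b) instead of (a), but the underlying fact that $k$, $\ell$ and $\text{lcm}(k,\ell)$ have identical prime supports makes this immediate.
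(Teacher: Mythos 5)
Your proof is correct and is essentially identical to the paper's, which derives the corollary immediately from the symplectic criterion (ii) of the main theorem via exactly the two lcm observations you state. The only quibble is the justification of (b): the phrase ``$k$, $\ell$, and $d$ share the same set of prime divisors'' is false in general (take $k=2$, $\ell=3$, $d=6$); what you actually use, and what is true, is that the set of primes dividing $d=\mathrm{lcm}(k,\ell)$ is precisely the union of the primes dividing $k$ and those dividing $\ell$, which still yields $\gcd(x,d)=1 \iff \gcd(x,k)=\gcd(x,\ell)=1$.
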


In Section~\ref{sec:appl}, we derive several other applications of Theorem~\ref{thm:intro_main}. It is classically known~\cite[\textsection 4.2]{TM} that there exists a normal series of congruence subgroups in $\SL(2,\Z)$ given by
\[ \tag{$\dagger$} 1 \lhd \Gamma(k) \lhd \Gamma_1(k) \lhd \Gamma_0(k), \text{ where } \]
\begin{gather*}
\Gamma(k) = \left\{ \begin{bmatrix} a & b \\ c & d \end{bmatrix} \in \SL(2,\Z) : a, d \equiv 1 \pmod{k}, \,b, c \equiv 0 \pmod{k}\right \},  \\
\Gamma_1(k) = \left\{ \begin{bmatrix} a & b \\ c & d \end{bmatrix} \in \SL(2,\Z) : a, d \equiv 1 \pmod{k}, \,c \equiv 0 \pmod{k}\right \},\\
\Gamma_0(k) = \left\{ \begin{bmatrix} a & b \\ c & d \end{bmatrix} \in \SL(2,\Z) :  \,c \equiv 0 \pmod{k}\right \},
\end{gather*}
and $\Gamma_0(k)/\Gamma_1(k) \cong \Z_k^{\times}$.
Since $\ker(\Psi_k) = \Mod(S_g)[k]$, the {\it level-$k$ subgroup} of $\Mod(S_g)$, it is apparent that $\Psi(\Mod(S_1)[k]) = \Gamma(k)$. We define $\Mod_{p_k}(S_g,e_1) := \Stab_{\Mod(S_g)}(e_1)$, where $e_1 \in H_1(S_g,\Z_k)$, and show that there exists a natural epimorphism $\Psi(\LMod_{p_k}(S_g)) \to \Z_k^{\times}$ whose kernel is $\Psi(\Stab_{\Mod(S_g)}(e_1))$. This yields an analog of the normal series ($\dagger$) in $\Sp(2g,\Z)$, for $g\geq 2$ given by
\[\tag{$\ast$} 1 \lhd \Psi(\Mod(S_g)[k]) \lhd \Psi(\Mod_{p_k}(S_g, e_1)) \lhd \Psi(\LMod_{p_k}(S_g)).\]
(It is worth mentioning here that in the theory of modular forms, the component groups of ($\ast$) are also known as \textit{paramodular groups}~\cite{HA,IO,RS}.) This series pulls back (as a consequence of Theorem~\ref{thm:intro_main}) via $\Psi$ to yield a generalization of ($\dagger$) to a normal series of subgroups in $\Mod(S_g)$, for $g \geq 2$. So, denoting a left-handed Dehn twist about a simple closed curve $c$ in $S_g$ by $T_c$, we have the following.

\begin{theorem*}\label{thm:ModSg-series_intro}
For $g \ge 1$ and $k \geq 2$, there exists a normal series of $\LMod_{p_k}(S_g)$ given by
$$ 1 \lhd \Mod(S_g)[k] \lhd \Mod_{p_k}(S_g, e_1) \lhd \LMod_{p_k}(S_g), \text{ where}$$ $$\LMod_{p_k}(S_g)/\Mod_{p_k}(S_g, e_1) \cong \Z_k^{\times} \text{ and }$$
$$[\Mod(S_g) : \Mod_{p_k}(S_g,e_1)] = | \{ \text{Primitive vectors in } \Z_k^{2g}\}|.$$ Moreover, the distinct cosets of $\LMod_{p_k}(S_g)/\Mod_{p_k}(S_g, e_1)$ are represented by the elements in $\S_k'' := \{T_{b_1}^{\bar{\ell}-1} T_{a_1}^{-1} T_{b_1}^{\ell -1} : \ell \in \Z_k^{\times} \text{ and } \ell \bar{\ell} \equiv 1 \pmod{k}\}$.
\end{theorem*}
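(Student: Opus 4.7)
The plan is to package every claim of the theorem around a single homomorphism $\chi : \LMod_{p_k}(S_g) \to \Z_k^{\times}$, constructed using Theorem~\ref{thm:intro_main}. By part (ii) of that theorem, every $f \in \LMod_{p_k}(S_g)$ sends $e_1 \in H_1(S_g, \Z_k)$ to a unique multiple $\ell \cdot e_1$ with $\ell \in \Z_k^{\times}$; set $\chi(f) := \ell$. A short verification, using that $\Psi_k$ is a group homomorphism and $\Z_k^{\times}$ is abelian, shows that $\chi$ is a homomorphism whose kernel is precisely $\Stab_{\Mod(S_g)}(e_1) = \Mod_{p_k}(S_g, e_1)$. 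In particular, $\Mod_{p_k}(S_g, e_1) \lhd \LMod_{p_k}(S_g)$, and the quotient claim reduces to proving $\chi$ is surjective.

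For surjectivity, I would compute $\chi$ on the set $\S_k''$ directly, using the transvection formula $\Psi(T_c)(x) = x - \hat{i}(x, [c])[c]$ for left-handed Dehn twists. Restricting to the $\{e_1, e_2\}$-plane and applying $T_{b_1}^{\ell - 1}$, then $T_{a_1}^{-1}$, then $T_{b_1}^{\bar{\ell}-1}$ to $e_1$, the resulting vector is $\ell e_1 + (1 - \ell \bar{\ell}) e_2$, which reduces to $\ell e_1$ modulo $k$ since $\ell \bar{\ell} \equiv 1 \pmod{k}$. This single computation simultaneously (a) certifies via Theorem~\ref{thm:intro_main}(ii) that each element of $\S_k''$ indeed lies in $\LMod_{p_k}(S_g)$, (b) exhibits a preimage under $\chi$ for every $\ell \in \Z_k^{\times}$, and (c) shows that the $|\Z_k^{\times}|$ elements of $\S_k''$ lie in distinct cosets of $\Mod_{p_k}(S_g, e_1)$, because their $\chi$-images are distinct. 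Both the quotient isomorphism and the description of the cosets by $\S_k''$ then follow.

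To establish the remaining inclusion of the normal series, observe that $\Mod(S_g)[k] = \ker(\Psi_k)$ acts trivially on all of $H_1(S_g, \Z_k)$, in particular on $e_1$, so $\Mod(S_g)[k] \subseteq \Mod_{p_k}(S_g, e_1)$; since $\Mod(S_g)[k]$ is already normal in the ambient $\Mod(S_g)$, it is a fortiori normal in $\Mod_{p_k}(S_g, e_1)$. For the index formula, the orbit-stabilizer theorem applied to the $\Mod(S_g)$-action on $H_1(S_g, \Z_k)$ identifies $[\Mod(S_g) : \Mod_{p_k}(S_g, e_1)]$ with the size of the $\Mod(S_g)$-orbit of $e_1$; since $e_1$ is primitive and $\Mod(S_g)$ acts transitively on primitive vectors (as recalled in the introduction), this orbit is the full set of primitive vectors in $\Z_k^{2g}$.

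The delicate point of the argument will be the transvection computation in the second step: the conventions for the left-handed Dehn twist and the intersection pairing have to be chosen consistently so that the specific product $T_{b_1}^{\bar{\ell}-1} T_{a_1}^{-1} T_{b_1}^{\ell-1}$ lands on $\ell$ rather than $\bar{\ell}$ or $-\ell$. Once $\chi$ is defined and this triple product has been evaluated, the rest of the proof is a short diagram chase through standard group theory.
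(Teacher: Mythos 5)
Your proposal is correct and follows essentially the same route as the paper: the paper also obtains the quotient $\Z_k^{\times}$ from an epimorphism reading off the action on $e_1$ (phrased there as the map $E \mapsto e_{22}$ on $\Psi_k(\LMod_{p_k}(S_g))$, which records $\bar{\ell}$ rather than your $\ell$), gets the index from transitivity on primitive vectors, and identifies the coset representatives by evaluating $\varphi_{\ell}=T_{b_1}^{\bar{\ell}-1}T_{a_1}^{-1}T_{b_1}^{\ell-1}$ on $e_1$. Your transvection computation $\varphi_{\ell}(e_1)=\ell e_1+(1-\ell\bar{\ell})e_2\equiv\ell e_1 \pmod{k}$ is consistent with the paper's matrix conventions for $\Psi(T_{a_1})$ and $\Psi(T_{b_1}^{-1})$, so the argument goes through.
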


Let $\iota \in \Mod(S_g)$ denote the hyperelliptic involution, and let $\phi$ denote the Euler totient function. By using Theorem~\ref{thm:intro_main} and the fact that $\Psi_k(\iota)$ commutes with every element of $\Sp(2g,\Z_k)$, we obtain the following extension of the normal series in Theorem~\ref{thm:ModSg-series_intro}.

\begin{corollary*}
\label{cor:norm_ser_ext_intro}
For $g \geq 1$ and $k \geq 3$, there exists a normal series of $\LMod_{p_k}(S_g)$ given by
$$ 1 \lhd \Mod(S_g)[k] \lhd \Mod_{p_k}(S_g, e_1) \lhd \langle \langle \Mod_{p_k}(S_g,e_1), \iota \rangle \rangle \lhd \LMod_{p_k}(S_g), \text{ where}$$ $\langle \langle \Mod_{p_k}(S_g,e_1), \iota \rangle \rangle$ denotes the normal closure of $\langle \Mod_{p_k}(S_g,e_1), \iota \rangle$ in $\LMod_{p_k}(S_g)$, 
$$\langle \langle \Mod_{p_k}(S_g,e_1), \iota \rangle \rangle/\Mod_{p_k}(S_g, e_1) \cong \Z_2 \text{ and }$$
$$[\LMod_{p_k}(S_g) : \langle \langle \Mod_{p_k}(S_g,e_1), \iota \rangle \rangle] = \phi(k)/2.$$
\end{corollary*}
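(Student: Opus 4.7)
The plan is to leverage the centrality of $\Psi_k(\iota) = -I$ in $\Sp(2g,\Z_k)$ to control everything in sight. First I would verify that $\iota \in \LMod_{p_k}(S_g) \setminus \Mod_{p_k}(S_g, e_1)$. Since $\iota$ acts as $-I$ on $H_1(S_g,\Z)$, the second row of $\Psi(\iota)$ is $(0,-1,0,\dots,0)$, so $d_{22} = -1 \in \Z_k^{\times}$ and $d_{2i} = 0$ for every $i \ne 2$; Theorem~\ref{thm:intro_main}(iii) therefore yields $\iota \in \LMod_{p_k}(S_g)$. On the other hand, $\Psi_k(\iota)(e_1) = -e_1 \ne e_1$ in $H_1(S_g,\Z_k)$ precisely when $k \ge 3$, so $\iota \notin \Mod_{p_k}(S_g,e_1)$ in this range.

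Next I would identify $N := \langle\langle \Mod_{p_k}(S_g, e_1), \iota\rangle\rangle$ as a union of exactly two cosets of $\Mod_{p_k}(S_g,e_1)$. For any $h \in \LMod_{p_k}(S_g)$, centrality of $-I$ gives
$$\Psi_k([h,\iota]) = \Psi_k(h)(-I)\Psi_k(h)^{-1}(-I) = I,$$
so $[h,\iota] \in \ker(\Psi_k) = \Mod(S_g)[k] \subseteq \Mod_{p_k}(S_g, e_1)$. Hence every $\LMod_{p_k}(S_g)$-conjugate of $\iota$ lies in the coset $\iota \cdot \Mod_{p_k}(S_g, e_1)$. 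The same centrality gives $\Psi_k(\iota f \iota^{-1}) = \Psi_k(f)$ for every $f \in \Mod_{p_k}(S_g,e_1)$, so $\iota$ normalizes $\Mod_{p_k}(S_g,e_1)$, and combining these facts with $\iota^2 = 1$ yields
$$N = \Mod_{p_k}(S_g,e_1) \;\sqcup\; \iota \cdot \Mod_{p_k}(S_g,e_1),$$
which is an index-$2$ extension of $\Mod_{p_k}(S_g,e_1)$, so $N/\Mod_{p_k}(S_g,e_1) \cong \Z_2$.

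For the index of $N$ in $\LMod_{p_k}(S_g)$, Theorem~\ref{thm:ModSg-series_intro} provides $[\LMod_{p_k}(S_g):\Mod_{p_k}(S_g,e_1)] = \phi(k)$, which is even for $k \ge 3$; multiplicativity of indices then gives $[\LMod_{p_k}(S_g):N] = \phi(k)/2$. The normality statements at the other levels of the series are inherited from Theorem~\ref{thm:ModSg-series_intro} and the very definition of $N$ as a normal closure. The main technical obstacle is the coset count in the middle step: without the twin observations that $-I$ is central in $\Sp(2g,\Z_k)$ and that $\ker(\Psi_k)$ already sits inside $\Mod_{p_k}(S_g,e_1)$, the normal closure could a priori spread across further cosets of $\Mod_{p_k}(S_g,e_1)$, and the index $\phi(k)/2$ would fail.
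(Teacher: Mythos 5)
Your proposal is correct and follows essentially the same route as the paper, which likewise rests on the two observations that $\Psi_k(\iota) = -I_{2g}$ is central in $\Sp(2g,\Z_k)$ and that $\iota \in \LMod_{p_k}(S_g) \setminus \Mod_{p_k}(S_g,e_1)$ exactly when $k \geq 3$. You simply make explicit the two-coset decomposition of the normal closure that the paper leaves implicit.
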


\noindent As Corollary~\ref{cor:norm_ser_ext_intro} implies that $\LMod_{p_k}(S_g) \cong \langle \langle \Mod_{p_k}(S_g,e_1), \iota \rangle \rangle$ if and only if $\phi(k)=2$, we have the following. 

\begin{corollary*}
$\langle \langle \Mod_{p_k}(S_g,e_1) ,\iota \rangle \rangle = \LMod_{p_k}(S_g)$ if and only if $k \in \{3,4,6\}$.
\end{corollary*}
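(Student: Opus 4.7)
The plan is to reduce the statement to an elementary number-theoretic computation using the preceding corollary. Indeed, Corollary~\ref{cor:norm_ser_ext_intro} gives
\[
[\LMod_{p_k}(S_g) : \langle \langle \Mod_{p_k}(S_g,e_1), \iota \rangle \rangle] = \phi(k)/2,
\]
so the normal closure coincides with $\LMod_{p_k}(S_g)$ precisely when $\phi(k) = 2$. The remaining task is therefore to enumerate the positive integers $k$ at which the Euler totient function takes the value $2$.

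For this enumeration I would write $k$ in its prime factorization $k = 2^{a_0} p_1^{a_1} \cdots p_r^{a_r}$ with distinct odd primes $p_i$, and apply the multiplicativity of $\phi$ together with the formula $\phi(p^a) = p^{a-1}(p-1)$. Since $\phi(p_i^{a_i})$ is divisible by $p_i - 1 \geq 2$ for every odd prime factor, and $\phi(2^{a_0}) = 2^{a_0 - 1}$ when $a_0 \geq 1$, a short case analysis shows that $\phi(k) = 2$ exactly when $k$ takes one of the following three forms: an odd prime $p$ with $p - 1 = 2$, yielding $k = 3$; a power of two $2^a$ with $2^{a-1} = 2$, yielding $k = 4$; or a product $2p$ with $p$ an odd prime satisfying $p - 1 = 2$, yielding $k = 6$. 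Any other shape of factorization produces at least one even factor $p_i - 1 \geq 4$ or a factor $2^{a_0 - 1} \geq 4$, forcing $\phi(k) > 2$. The converse values $\phi(3) = \phi(4) = \phi(6) = 2$ are immediate.

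There is no substantial obstacle here beyond the standard classification of solutions of $\phi(k) = 2$; all of the mapping-class-group content is already packaged into Corollary~\ref{cor:norm_ser_ext_intro}, and the present corollary is essentially its arithmetic shadow.
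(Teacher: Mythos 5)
Your proposal is correct and follows exactly the paper's route: the paper likewise reads off from the index formula $[\LMod_{p_k}(S_g) : \langle \langle \Mod_{p_k}(S_g,e_1), \iota \rangle \rangle] = \phi(k)/2$ that equality holds precisely when $\phi(k)=2$, and then invokes the classification $\phi(k)=2 \iff k \in \{3,4,6\}$. Your only addition is to spell out the elementary totient computation that the paper leaves implicit.
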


\noindent Let $B_n$ denote the braid group on $n$ strands, and let $S_1^1$ (resp. $S_{1,1}$) denote the surface $S_1$ with one boundary component (resp. one marked point). By taking the preimage of the component groups of the normal series in ($\dagger$) under the $cap$ epimorphism $\Mod(S_1^1) (\cong B_3) \xrightarrow{cap} \Mod(S_{1,1}) (\cong \SL(2,\Z))$, we obtain an analog of this series in $B_3$, whose component groups have quotients that are isomorphic to the corresponding quotients in the series ($\dagger$).

Let $\mathcal{L}_g= \{T_{a_1},T_{b_1}, \ldots,T_{a_g},T_{b_g},T_{c_1},\ldots,T_{c_{g-1}}\},$ be the Lickorish generating set~\cite{WBRL} for $\Mod(S_g)$ (where the curves are as indicated in Figure~\ref{fig:Sgk_free_hom}). Let $\S_{g,k}$ be a finite set of words in $\mathcal{L}_g$ generating $\Mod(S_g)[k]$. As finite generating sets for the congruence subgroups $\Gamma(k)$, $\Gamma_1(k)$, and $\Gamma_0(k)$ are well understood (see \cite[Lemma 12]{DKMO} and \cite[Proposition 1.17]{WS}), let $\S_k'$ be a finite set of words in the letters $\mathcal{L}_1$ generating $\Psi(\Mod_{p_k}(S_1,e_1)) = \Gamma_1(k)$ in $\Psi(\Mod(S_1))$ (as detailed in~\cite{WS}). Consider the corresponding set $\tilde{\S}_k$ of words (in $\mathcal{L}_1$) in $B_3$ of representatives of the distinct cosets of $cap^{-1}(\Gamma_1(k))/\ker(cap)$. For $g \geq 2$, let $i : S_1^1 \hookrightarrow S_g$ be an inclusion such that $\pi_1(i(S_1^1)) = \langle a_1,b_1 \rangle$, and let $\hat{i}:\Mod(S_1^1) \to \Mod(S_g)$ be the inclusion induced by $i$. By applying the symplectic criterion in Theorem~\ref{thm:intro_main} and by exploiting the structure of matrices in $\Sp(2g,\Z_k)$, we derive the following generating set for $\Mod_{p_k}(S_g,e_1)$.
\begin{theorem*}
\label{thm:lmod_gen_set_intro}
For $g,k \geq 2$, 
$$\Mod_{p_k}(S_g,e_1) = \langle \hat{i}(\tilde{\S}_k) \cup \S_{g,k} \cup \{T_{a_2},\ldots,T_{a_g},T_{b_2}, \ldots,T_{b_g},T_{c_1},\ldots,T_{c_{g-1}} \} \rangle.$$
\end{theorem*}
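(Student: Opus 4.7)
The plan is to reduce the problem to a computation inside $\Sp(2g,\Z_k)$ via the symplectic representation $\Psi_k$. Let $G$ denote the subgroup of $\Mod(S_g)$ generated by the listed elements. Since $\S_{g,k} \subseteq G$ generates $\Mod(S_g)[k] = \ker(\Psi_k)$, which is normal in $\Mod_{p_k}(S_g,e_1)$, the theorem reduces to establishing the equality
$$\Psi_k(G) = \Psi_k(\Mod_{p_k}(S_g,e_1)) = \Stab_{\Sp(2g,\Z_k)}(e_1).$$

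The inclusion $G \subseteq \Mod_{p_k}(S_g,e_1)$ is verified one generator at a time. By the Picard--Lefschetz formula $(T_c)_\ast([a_1]) = [a_1] + \langle [a_1], [c]\rangle [c]$, each $T_{a_j}$ and $T_{b_j}$ with $j \geq 2$, as well as each $T_{c_i}$ with $i \geq 1$, fixes $[a_1]$, since the algebraic intersection of $a_1$ with each of $a_j$, $b_j$, and $c_i$ vanishes. The elements of $\hat{i}(\tilde{\S}_k)$ are supported in a copy of $S_1^1$ containing the first handle and induce there an element of $\Gamma_1(k) = \Stab_{\SL(2,\Z_k)}(e_1)$, while acting trivially on the remaining $a_j, b_j$.

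For the reverse inclusion I would first pin down the structure of the stabilizer. Combining $Me_1 = e_1$ with the identities $\langle Me_1, Me_j\rangle = \langle e_1, e_j\rangle$ forces the second row of $M$ to equal $e_2^T$, so
$$M = \begin{pmatrix} 1 & t & v^T \\ 0 & 1 & 0 \\ 0 & w & A \end{pmatrix},$$
with $A \in \Sp(2g-2,\Z_k)$, $w \in \Z_k^{2g-2}$, $t \in \Z_k$, and $v$ determined by $A$ and $w$ through the remaining symplectic relations. This yields a semidirect product decomposition $\Stab_{\Sp(2g,\Z_k)}(e_1) \cong \Sp(2g-2,\Z_k) \ltimes H$ in which $H$ is a Heisenberg-type group parametrized by $(w,t)$. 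The $\Psi_k$-images of the twists in $\{T_{a_j}, T_{b_j} : 2 \leq j \leq g\} \cup \{T_{c_i} : 2 \leq i \leq g-1\}$ are (via the natural inclusion along handles $2, \ldots, g$) the Lickorish generators of $\Mod(S_{g-1})$ and therefore surject onto the $\Sp(2g-2,\Z_k)$-factor. The elements of $\hat{i}(\tilde{\S}_k)$ realize, under $\Psi_k$, every element of the form $(I,0,t)$ with $t \in \Z_k$, since the $cap$-images of $\tilde{\S}_k$ generate $\Gamma_1(k)$ and the reduction $\Gamma_1(k) \to \Gamma_1(k)/\Gamma(k) \cong \Z_k$ is surjective. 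Finally, $T_{c_1} T_{a_2}^{-1}$ has trivial $A$-part and primitive $w$-component $[a_2]$; conjugating by the $\Sp(2g-2,\Z_k)$-generators (which act transitively on primitive vectors in $\Z_k^{2g-2}$) produces elements of the form $(I, w, \ast)$ for every primitive $w$, and iterated products together with $t$-corrections drawn from $\hat{i}(\tilde{\S}_k)$ fill out all of $H$.

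The principal obstacle is the last step. The Heisenberg law $(A_1, w_1, t_1)(A_2, w_2, t_2) = (A_1 A_2,\; w_1 + A_1 w_2,\; t_1 + t_2 + v_1^T w_2)$ contains a bilinear cocycle in the $t$-coordinate, so each product or conjugation drifts $t$ by an amount that depends on the earlier factors. One must check carefully that these drifts always lie in $\Z_k$ and are absorbed by suitable elements of $\hat{i}(\tilde{\S}_k)$, so that the combined generators produce every $(I, w, t) \in H$ rather than merely a proper sub-coset. Once this is verified, the three families of generators surject $\Psi_k(G)$ onto $\Stab_{\Sp(2g,\Z_k)}(e_1)$, finishing the proof.
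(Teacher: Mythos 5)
Your proposal is correct in substance, but it takes a genuinely different route from the paper's. The paper also reduces modulo $\ker(\Psi_k)=\Mod(S_g)[k]$ and then works entirely inside $\Sp(2g,\Z_k)$, but its argument is a direct Gaussian-elimination computation (written out only for $g=2$): starting from an arbitrary $A\in\Psi_k(\Mod_{p_k}(S_2,e_1))$ in the normal form of Theorem~\ref{thm:symp_Gamma1_crit}, it right-multiplies by four explicit words $M_1,\dots,M_4$ in $\Psi_k(T_{a_2}),\Psi_k(T_{b_2}),\Psi_k(T_{c_1})$ and the $\eta_k$-image of $\Gamma_1(k)$, until the product lands in $\eta_k(\Psi_k(\Mod_{p_k}(S_1,e_1)))$. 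You instead identify $\Stab_{\Sp(2g,\Z_k)}(e_1)$ structurally as $\Sp(2g-2,\Z_k)\ltimes H$ with $H$ a Heisenberg-type group, and check that each factor is reached: the handle-$2$-through-$g$ twists surject onto $\Sp(2g-2,\Z_k)$, the words $\hat{i}(\tilde{\S}_k)$ give the full center $\{(I,0,t):t\in\Z_k\}$ (since the image of $\Gamma_1(k)$ in $\SL(2,\Z_k)$ is the full unipotent upper-triangular subgroup), and conjugates of $T_{c_1}T_{a_2}^{-1}$ sweep out all primitive $w$, hence all of $\Z_k^{2g-2}$ since every vector there is a sum of two primitive ones. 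The ``principal obstacle'' you flag is not actually an obstacle: the cocycle drift in the $t$-coordinate always lies in $\Z_k$ by construction, and since the entire central subgroup $\{(I,0,t)\}$ is already in your generated group, any drift is absorbed for free; you could state this in one line rather than leaving it as a worry. What your approach buys is a uniform treatment of all $g\geq 2$ and a conceptual description of the stabilizer (matching the paramodular-group picture alluded to in the introduction); what the paper's buys is a short, completely explicit certificate for $g=2$, at the cost of an unproved ``easily generalizes'' claim for higher genus.
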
 

\noindent Let $\I(S_g)$ denote the Torelli group of $S_g$. By applying Theorems~\ref{thm:ModSg-series_intro} and~\ref{thm:lmod_gen_set_intro}, we obtain generating sets for $\LMod_{p_k}(S_g)$, and hence $\displaystyle \UMod(S_g) :=  \bigcap_{k \geq 2} \LMod_{p_k}(S_g)$.

\begin{corollary*}
\label{cor:lmod_gen_set2_intro}
For $g,k \geq 2$, we have 
\begin{enumerate}[(i)]
\item $\LMod_{p_k}(S_g) = \langle \S_k'' \cup \hat{i}(\tilde{\S}_k) \cup \S_{g,k} \cup \{T_{a_2},\ldots,T_{a_g},T_{b_2}, \ldots,T_{b_g},T_{c_1},\ldots,T_{c_{g-1}} \} \rangle,$ and
\item $\UMod(S_g) = \langle \S(g) \cup \{T_{a_1},T_{a_2},\ldots,T_{a_g},T_{b_2}, \ldots,T_{b_g},T_{c_1},\ldots,T_{c_{g-1}}, \iota \} \rangle,$ where $\I(S_g) = \langle \S(g) \rangle$.
\end{enumerate}
\end{corollary*}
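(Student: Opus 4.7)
\textbf{Part (i)} is immediate: Theorem~\ref{thm:ModSg-series_intro} identifies $\S_k''$ as a complete set of coset representatives for $\LMod_{p_k}(S_g)/\Mod_{p_k}(S_g, e_1)$, so $\LMod_{p_k}(S_g) = \langle \S_k'' \cup \Mod_{p_k}(S_g, e_1)\rangle$. Substituting the generating set for $\Mod_{p_k}(S_g, e_1)$ supplied by Theorem~\ref{thm:lmod_gen_set_intro} yields (i).

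\textbf{Part (ii).} My first step would be to characterize $\UMod(S_g)$ homologically. By criterion (ii) of Theorem~\ref{thm:intro_main}, a mapping class $f$ lies in $\LMod_{p_k}(S_g)$ if and only if $f_\ast(e_1) \equiv \ell\, e_1 \pmod{k}$ for some $\ell \in \Z_k^\times$. Writing $f_\ast(e_1) = \sum_{i=1}^{2g} c_i e_i \in H_1(S_g,\Z)$, this forces $c_i \equiv 0 \pmod{k}$ for $i \neq 1$ and $c_1 \in \Z_k^\times$ for every $k \geq 2$, which over the integers reduces to $c_i = 0$ for $i\neq 1$ and $c_1 = \pm 1$. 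Hence $\UMod(S_g) = \{f \in \Mod(S_g) : f_\ast(e_1) = \pm e_1\}$. Since $\Psi(\iota) = -I$, we have $\iota_\ast(e_1) = -e_1$, so every $f \in \UMod(S_g)$ with $f_\ast(e_1) = -e_1$ satisfies $(\iota f)_\ast(e_1) = e_1$; therefore
\[\UMod(S_g) = \langle\, \Stab_{\Mod(S_g)}(e_1),\ \iota\,\rangle,\]
where the stabilizer is taken in $H_1(S_g,\Z)$.

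It remains to generate $\Stab_{\Mod(S_g)}(e_1)$. Via the short exact sequence
\[1 \to \I(S_g) \to \Stab_{\Mod(S_g)}(e_1) \to \Stab_{\Sp(2g,\Z)}(e_1) \to 1,\]
the problem reduces to showing that the $\Psi$-images of $T_{a_1}, T_{a_2}, \ldots, T_{a_g}, T_{b_2}, \ldots, T_{b_g}, T_{c_1}, \ldots, T_{c_{g-1}}$ generate $\Stab_{\Sp(2g,\Z)}(e_1)$. The containment ``$\subseteq$'' is immediate, since each listed curve has trivial algebraic intersection with $b_1$, so the associated transvection fixes $e_1$. For the reverse, any $M \in \Stab_{\Sp(2g,\Z)}(e_1)$ has first column $e_1$, and the relation $M^\top J M = J$ (with $J$ the standard symplectic form) further forces its second row to equal $e_2^\top$. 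Successive left and right multiplication by the $\Psi$-images of $T_{a_1}$ (to clear the $(1,2)$-entry), $T_{c_1},\ldots,T_{c_{g-1}}$ (to move between handles), and $T_{a_j},T_{b_j}$ for $j\geq 2$ (to reduce the remaining $(2g-2)\times(2g-2)$ block) then reduces $M$ to the identity, by induction on $g$. Combining this with $\I(S_g) = \langle\S(g)\rangle$ yields (ii).

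\textbf{The main obstacle} is the final symplectic reduction step. The easy direction is a one-line intersection calculation, but the reverse requires a careful column-reduction procedure performed strictly inside $\Sp(2g,\Z)$, coordinating the ``mixing'' transvections $T_{c_j}$ (which connect adjacent handles) with the pure $T_{a_j}, T_{b_j}$ so that intermediate matrices always retain both $e_1$ as their first column and $e_2^\top$ as their second row throughout the reduction.
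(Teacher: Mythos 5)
Your proposal is correct and follows essentially the same route as the paper: part (i) is precisely the paper's combination of the coset representatives $\S_k''$ with the generating set of $\Mod_{p_k}(S_g,e_1)$, and for part (ii) the paper likewise notes that $\Stab_{\Mod(S_g)}(e_1)$ (taken in $H_1(S_g,\Z)$) is an index-$2$ subgroup of $\UMod(S_g)$ with $\iota$ representing the other coset, and then generates this stabilizer by the same $\Sp(2g,\Z)$ column-reduction argument used for $\Mod_{p_k}(S_g,e_1)$. One small slip: since $e_1=[a_1]$, the relevant fact in your ``easy direction'' is that each listed curve is disjoint from $a_1$ (not that it has trivial algebraic intersection with $b_1$, which is false for $a_1$ and $c_1$).
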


\noindent Thus, by a result due to Johnson~\cite{DJ}, we can now infer (from Corollary~\ref{cor:lmod_gen_set2_intro} (ii)) that $\UMod(S_g)$ is indeed finitely generated for $g \geq 3$. 

In the penultimate subsection of Section~\ref{sec:appl}, we derive conditions for the existence of conjugates of certain finite-order mapping class and roots of Dehn twists in $\Mod_{p_k}(S_g,e_1)$, by examining their reduction systems. In particular, we show that: 
\begin{proposition*}
For $g \geq 2$, if $f \in \Mod(S_g)$ is either a finite-order mapping class whose corresponding orbifold has genus strictly greater than zero, or a nontrivial root of a Dehn twist about a nonseparating curve in $S_g$, then $f$ has a conjugate in $\Mod_{p_k}(S_g,e_1)$.
\end{proposition*}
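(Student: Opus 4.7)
The plan is to produce, in both cases, a primitive vector $v\in H_1(S_g,\Z_k)$ fixed by $\Psi_k(f)$. Once such a $v$ is available, the transitivity of the $\Mod(S_g)$-action on primitive vectors in $H_1(S_g,\Z_k)$ recalled in the introduction yields an $h\in\Mod(S_g)$ with $\Psi_k(h)(v)=e_1$, and then $hfh^{-1}\in\Stab_{\Mod(S_g)}(e_1)=\Mod_{p_k}(S_g,e_1)$, as required.

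Since $H_1(S_g,\Z)$ is torsion-free, the integral invariant subgroup $H_1(S_g,\Z)^{\langle f\rangle}$ is automatically pure in $H_1(S_g,\Z)$ and is therefore a direct summand; any basis element of this summand is a $\Z$-primitive vector, and its mod-$k$ reduction furnishes the required $v$. The proof thus reduces to showing that $H_1(S_g,\Z)^{\langle f\rangle}\neq 0$ in each of the two cases.

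For the finite-order case with quotient orbifold $\mathcal{O}=S_g/\langle f\rangle$ of genus $h\geq 1$, the standard transfer argument identifies the rational invariants $H_1(S_g,\mathbb{Q})^{\langle f\rangle}$ with $H_1(\mathcal{O},\mathbb{Q})$, which has dimension $2h\geq 2$, so $H_1(S_g,\Z)^{\langle f\rangle}$ has rank at least $2$. For the root-of-Dehn-twist case, the equality $f^n=T_c$ together with $T_{f(c)}=f T_c f^{-1}=T_c$ forces $f(c)=c$, hence $\Psi(f)([c])=\epsilon[c]$ with $\epsilon\in\{\pm 1\}$. If $\epsilon=+1$, then $[c]$ itself is the desired nonzero invariant. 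If $\epsilon=-1$, let $b_1$ be a symplectic dual to $[c]$ and set $u=\sum_{j=0}^{n-1}\Psi(f)^j(b_1)$; a telescoping gives $(\Psi(f)-I)(u)=\Psi(f)^n(b_1)-b_1=\Psi(T_c)(b_1)-b_1=\pm [c]$, and a direct calculation using $\Psi(f)([c])=-[c]$ then shows that $v=2u\pm[c]$ satisfies $\Psi(f)(v)=v$, with $v\neq 0$ because its $[c]$-coordinate is odd.

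The step I expect to be most delicate is the orientation-reversing subcase $\epsilon=-1$ above: one has to verify that the explicit $v$ furnishes a genuine nonzero integer invariant regardless of the internal structure of $\Psi(f)$ on the generalised eigenspaces. A potentially cleaner alternative is a direct Jordan-form argument: because $\Psi(T_c)-I$ has rank one and the integer constraints on the single $2\times 2$ block that contributes this rank-one unipotent piece preclude eigenvalue $-1$ for $\Psi(f)$ on that block when $n\geq 2$, the eigenvalue $+1$ must occur, which immediately forces $H_1(S_g,\Z)^{\langle f\rangle}\neq 0$.
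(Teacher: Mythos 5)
Your proof is correct, and while it shares the paper's overall skeleton---produce a primitive vector in $H_1(S_g,\Z_k)$ fixed by $f$, then use the transitivity of the $\Mod(S_g)$-action on primitive vectors to conjugate it to $e_1$---the way you produce that vector is genuinely different in both cases. For the finite-order case the paper takes the preimage in $S_g$ of a nonseparating curve on the orbifold $\O_f$ and uses the sum of the resulting curve classes as the invariant primitive vector; your transfer-plus-purity argument (the isomorphism $H_1(S_g,\mathbb{Q})^{\langle f\rangle}\cong H_1(\O_f,\mathbb{Q})$ shows the rational invariants are nonzero, and the integral fixed lattice is pure, hence a direct summand containing a $\Z$-primitive vector) reaches the same conclusion without having to check that the explicit sum of curve classes is primitive---a point the paper asserts without justification---so your route is arguably cleaner. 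For the root-of-a-twist case the paper invokes the known fact that a root of $T_c$ preserves $c$ and concludes directly that $[c]$ is fixed (implicitly using that the action preserves the orientation of $c$); you rederive $f(c)=c$ from $T_{f(c)}=fT_cf^{-1}$ and then treat both signs $\Psi(f)([c])=\epsilon[c]$, $\epsilon=\pm1$, with the telescoping construction $v=2u\pm[c]$ handling $\epsilon=-1$. That subcase is in fact vacuous, since $\Psi(f)^n([c])=\epsilon^n[c]=[c]$ forces $\epsilon=1$ once one knows such roots have odd degree, but your computation for it checks out, so including it costs nothing and keeps the argument self-contained. The one weak spot is the closing ``Jordan-form alternative'': $\Psi(f)$ need not preserve any particular $2\times2$ symplectic block, so ``the block that contributes the rank-one unipotent piece'' is not well defined as stated; fortunately your main argument does not rely on it.
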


\noindent Further, we show that there exist no irreducible finite-order mapping class in $\Mod_{p_k}(S_g,e_1)$. We also derive conditions for the existence in $\Mod_{p_k}(S_g,e_1)$, of a conjugate of root of a Dehn twist about a separating curve (see Proposition~\ref{prop:root_lift}). In the final subsection, we derive equivalent conditions for the liftability (under $p_k$) of a certain class of Penner-type~\cite{RP} pseudo-Anosovs in $\Mod(S_g)$. More specifically, we show the following.

\begin{proposition*}
For $g \geq 2$, let $f\in \Mod(S_g)$ be a Penner-type pseudo-Anosov mapping class of the form
$$f := \prod_{i=1}^{g-1} T_{c_i}^{-p_i} \prod_{j=1}^g \left(T_{a_j}^{-k_j}T_{b_j}^{\ell_j}\right).$$
Then $f \in \LMod_{p_k}(S_g)$ if and only if $k \mid \ell_1$. 
\end{proposition*}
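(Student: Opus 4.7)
My plan is to read the proposition off the symplectic criterion in Theorem~\ref{thm:intro_main}. By the equivalence (i)~$\Leftrightarrow$~(ii) there, $f \in \LMod_{p_k}(S_g)$ exactly when $\Psi_k(f)$ stabilizes $\{\ell e_1 : \ell \in \Z_k^\times\} \subset H_1(S_g,\Z_k)$; since $\Psi_k(f)$ is linear this reduces to the single condition that $f(a_1) \equiv \ell\,a_1 \pmod k$ for some $\ell \in \Z_k^\times$. The whole argument is therefore a direct computation of the coordinates of $f(a_1)$ in the basis $\{a_1,b_1,\ldots,a_g,b_g\}$, followed by a comparison of coefficients modulo $k$.

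For the computation I would use the transvection formula $T_c(v) = v + \hat{\imath}(v,c)\,[c]$ for a left-handed Dehn twist, together with two structural observations about the Lickorish curves of Figure~\ref{fig:Sgk_free_hom}: that $[c_i]\in\langle a_i,a_{i+1}\rangle$ (since $c_i$ is the middle curve of the chain $b_i,c_i,b_{i+1}$), and that $T_{a_j}$ and $T_{b_j}$ act nontrivially only on the $j$-th symplectic pair. These immediately give that each $T_{c_i}^{-p_i}$ fixes every $a_j$, that $T_{c_i}^{-p_i}$ fixes $b_1$ for all $i\ge 2$, and that every factor of $\prod_{j=1}^g T_{a_j}^{-k_j}T_{b_j}^{\ell_j}$ with $j\neq 1$ fixes $a_1$. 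Collecting the surviving contributions yields, up to signs determined by the orientation convention,
\[
f(a_1) \;=\; \bigl(1+k_1\ell_1 \pm p_1\ell_1\bigr)\,a_1 \,\pm\, \ell_1\,b_1 \,\pm\, p_1\ell_1\,a_2,
\]
and in particular no other $a_j$ or $b_j$ appears.

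Imposing $f(a_1) \equiv \ell\,a_1 \pmod k$ for some unit $\ell$ then forces the $b_1$-coefficient to vanish modulo $k$, which is precisely the condition $k \mid \ell_1$, and establishes the ``only if'' direction. Conversely, once $k\mid\ell_1$ the $a_2$-coefficient also vanishes modulo $k$ while the $a_1$-coefficient reduces to $1 \in \Z_k^\times$, so condition (ii) of Theorem~\ref{thm:intro_main} is satisfied and $f\in\LMod_{p_k}(S_g)$. The one delicate ingredient is the homological description $[c_i]\in\langle a_i,a_{i+1}\rangle$: without it the chain-twist exponents $p_i$ could in principle contribute to the $b_1$-component of $f(a_1)$, but this observation guarantees that the obstruction to liftability is carried entirely by $\ell_1$ and not by the remaining parameters $k_j$ or $p_i$.
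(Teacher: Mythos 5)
Your proof is correct, and it follows a somewhat leaner route through the same criterion than the paper does. The paper verifies condition (iii) of its symplectic criterion by writing out the full $2g\times 2g$ block matrices for $\Psi(T_{a_j}^{-k_j}T_{b_j}^{\ell_j})$ and $\Psi(T_{c_i}^{p_i})$, multiplying them (using identities like $P_{p_i}R_{p_i}=R_{p_i}$), and reading off that the $(2,1)$ entry of $\Psi(f)$ equals $\ell_1$; for the ``if'' direction it simply cites the generating set of $\LMod_{p_k}(S_g)$ from Corollary~\ref{cor:lmod_gen_set2}. You instead use the stabilizer formulation $f\in\Stab_{\Mod(S_g)}(\{\ell e_1:\ell\in\Z_k^{\times}\})$, which reduces everything to computing the single vector $f(a_1)$ by transvections; your formula $f(a_1)=(1+k_1\ell_1\pm p_1\ell_1)a_1\pm\ell_1 b_1\pm p_1\ell_1 a_2$ is correct (I verified the signs against the matrices $\Psi(T_{a_2}),\Psi(T_{b_2}),\Psi(T_{c_1})$ displayed in the proof of Theorem~\ref{thm:lmod_gen_set}), and both directions of the equivalence then fall out of the same computation, since the $b_1$-coefficient forces $k\mid\ell_1$ and, conversely, $k\mid\ell_1$ kills the $a_2$-coefficient and reduces the $a_1$-coefficient to a unit. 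Your key structural input, $[c_i]=\pm([a_i]-[a_{i+1}])\in\langle a_i,a_{i+1}\rangle$, is exactly what the paper encodes in the blocks $R_{p_i}$, so nothing is missing; the payoff of your version is that it avoids the block-matrix bookkeeping and makes transparent that the obstruction lives entirely in $\ell_1$.
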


\section{Symplectic criteria for liftability}
\label{sec:sympl_crit}
In this section, we will derive an explicit description of the structure of an arbitrary matrix in $\Psi(\LMod_{p_k}(S_g))$. We will follow the notation introduced in Section~\ref{sec:intro}, and also assume from here on that $e_1 \in H_1(S_g,\Z_k)$ (unless mentioned otherwise). The key ingredient in the derivation of our symplectic criterion is the following result from the theory of group actions on surfaces~\cite{H1,M1}. 
\begin{theorem}
\label{thm:group_actions}
For $\ell \geq 1$, a finite group $G$ acts on a surface $S_{\ell}$ if and only if there exists a short exact sequence
$$1\to \pi_1(S_{\ell})\xrightarrow{\alpha} \pi_1^{orb}(S_{\ell}/G) \xrightarrow{\varphi} G \to 1,$$
where $\pi_1^{orb}(S_{\ell}/G)$ is the orbifold fundamental group of $S_{\ell}/G$ and $\varphi$ is an order-preserving epimorphism. In particular, for the cover $p_k : S_{g_k} \to S_g$, we have the short exact sequence:
$$1\to \pi_1(S_{g_k})\xrightarrow{(p_k)_{\ast}} \pi_1(S_g) \xrightarrow{\varphi} \Z_k \to 1.$$
\end{theorem}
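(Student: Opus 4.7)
The plan is to establish this classical result via orbifold covering theory, viewing the quotient $S_\ell/G$ as a $2$-orbifold whose cone points record the nontrivial point-stabilizers of the action. Since the theorem is cited here to \cite{H1, M1}, I would rely on the standard correspondence between subgroups of an orbifold fundamental group and orbifold covers, and concentrate on how the order-preserving condition on $\varphi$ enters.

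For the forward direction, suppose $G$ acts faithfully and orientation-preservingly on $S_\ell$. The quotient $\mathcal{O} := S_\ell/G$ inherits a $2$-orbifold structure in which the image of an orbit with point-stabilizer of order $n$ becomes a cone point of order $n$. The projection $\pi : S_\ell \to \mathcal{O}$ is then an orbifold cover of degree $|G|$, which by the standard orbifold Galois correspondence produces an exact sequence
$$1 \to \pi_1(S_\ell) \xrightarrow{\pi_\ast} \pi_1^{orb}(\mathcal{O}) \xrightarrow{\varphi} G \to 1.$$
The essential observation is local: a small loop encircling a cone point of order $n$ represents an element of order exactly $n$ in $\pi_1^{orb}(\mathcal{O})$ and lifts to a generator of the corresponding point-stabilizer in $G$, which also has order $n$. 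Hence $\varphi$ is order-preserving.

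For the backward direction, I would start with the given SES and show that $\ker(\varphi)$ is torsion-free: up to conjugation, every torsion element of $\pi_1^{orb}(\mathcal{O})$ lies in a cyclic cone-point subgroup, and the order-preserving hypothesis forces $\varphi$ to be injective on each such subgroup. A torsion-free subgroup of $\pi_1^{orb}(\mathcal{O})$ of index $|G|$ corresponds, again by the orbifold Galois correspondence, to an honest (unramified) surface cover of $\mathcal{O}$, and by the given left-hand term of the SES this cover must be homeomorphic to $S_\ell$. The quotient $G \cong \pi_1^{orb}(\mathcal{O})/\ker(\varphi)$ then acts as the group of deck transformations.

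Specialization to $p_k : S_{g_k} \to S_g$ is immediate: the $2\pi/k$ rotation acts freely, so the orbifold $S_{g_k}/\Z_k$ has no cone points, whence $\pi_1^{orb}(S_g) = \pi_1(S_g)$ and the order-preserving condition is vacuous; the stated sequence reduces to the usual covering-space SES. The main obstacle is genuinely conceptual rather than computational: one must set up $\pi_1^{orb}$, its covering theory, and the order-preservation bookkeeping with enough care that both implications are rigorous. This is precisely the content of \cite{H1, M1}, and a self-contained write-up would spend most of its length on that setup rather than on the short arguments sketched above.
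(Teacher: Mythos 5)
The paper does not actually prove this statement: it is quoted as a known result from the theory of group actions on surfaces and attributed to \cite{H1,M1}, so there is no in-paper argument to compare against. Your sketch is precisely the standard orbifold/Fuchsian-group argument from those references --- quotient orbifold plus the Galois correspondence in the forward direction, and order-preservation forcing a torsion-free kernel (hence an honest surface cover with $G$ as its deck group) in the backward direction --- and it is correct in outline; the only point deserving extra care is that in the classical formulation the homeomorphism type of the cover is pinned down by Riemann--Hurwitz from the signature of $\pi_1^{orb}(S_\ell/G)$, whereas here it is effectively handed to you by the hypothesis that the kernel is $\pi_1(S_\ell)$.
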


\subsection{Criteria for $\LMod_{p_k}(S_g)$} Let $f \in \Mod(S_g)$. Using the fact that our covering action is cyclic, we obtain a homological criterion for the liftability of $f$ under $p_k$ which, in turn, yields an equivalent symplectic criterion for $f$ to be contained in $\LMod_{p_k}(S_g)$.

\begin{theorem}\label{thm:symp_lift_crit}
Given an $f \in \Mod(S_g)$, the following statements are equivalent.
\begin{enumerate}[(i)]
\item $f \in \LMod_{p_k}(S_g)$.
\item $\Psi(f) = (d_{ij})_{2g \times 2g}$, where $k|d_{2i}$, for $1\leq i\leq 2g$ and $i \neq 2$, and $\gcd(d_{22},k)=1$.
\item $\Psi_k(f) = (e_{ij})_{2g \times 2g}$, where $e_{2i}=0$, for $1\leq i\leq 2g$ and $i \neq 2$, and $e_{22} \in \Z_k^{\times}$.
\item $f \in \Stab_{\Mod(S_g)}(\{\ell e_1:\ell \in \Z_k^{\times}\})$.
\end{enumerate}
\end{theorem}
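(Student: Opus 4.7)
The plan is to establish the cycle (i) $\Leftrightarrow$ (iv) $\Leftrightarrow$ (iii) $\Leftrightarrow$ (ii), with the first equivalence encoding the covering space theory and the remaining ones being symplectic linear algebra over $\Z$ and $\Z_k$. The common setup is the following: by Theorem~\ref{thm:group_actions}, the cover $p_k$ arises from an epimorphism $\varphi : \pi_1(S_g) \to \Z_k$ with $\ker \varphi = (p_k)_\ast(\pi_1(S_{g_k}))$. Since $\Z_k$ is abelian, $\varphi$ factors through the abelianization as $\bar\varphi : H_1(S_g,\Z) \to \Z_k$. Reading off $(p_k)_\#$ from~(\ref{eqn:psharp}), its image is $\langle a_1, k b_1, a_j, b_j : 2 \le j \le g\rangle$, so $\bar\varphi$ is characterized by $\bar\varphi(b_1) = 1$ and vanishing on all other standard generators. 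Equivalently, writing $\omega$ for the algebraic intersection form, $\bar\varphi(v) \equiv \omega(e_1,v) \pmod k$.

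For (i) $\Leftrightarrow$ (iv), basic covering space theory gives $f \in \LMod_{p_k}(S_g)$ iff $f_\ast$ preserves $\ker\varphi$, and via the factorization through $H_1$ this is equivalent to $\Psi_k(f)$ preserving $\ker\bar\varphi$ in $H_1(S_g,\Z_k)$. Since $\Psi_k(f)$ is a symplectic automorphism of this module, such preservation forces $\bar\varphi \circ \Psi_k(f) = \lambda \bar\varphi$ for a unique $\lambda \in \Z_k^\times$. Using $\bar\varphi = \omega_k(e_1, \cdot)$ together with $\omega_k$-invariance, this rearranges to $\Psi_k(f^{-1})(e_1) = \lambda e_1$, that is, $\Psi_k(f)(e_1) \in \Z_k^\times \cdot e_1$, which is condition (iv).

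For the remaining equivalences, (ii) $\Leftrightarrow$ (iii) is just reduction mod $k$. To convert (iv) into the row form (iii), I would substitute $M = \Psi_k(f)$ into the symplectic identity $\omega_k(M e_1, M e_j) = \omega_k(e_1,e_j) = \delta_{j,2}$ for $j = 1, \ldots, 2g$. Since $\omega_k(e_1,\cdot)$ extracts the second coordinate, writing $M e_1 = \ell e_1$ with $\ell \in \Z_k^\times$ turns each of these equations into $\ell\, e_{2j} = \delta_{j,2}$, forcing $e_{22} = \ell^{-1} \in \Z_k^\times$ and $e_{2j} = 0$ for $j \ne 2$. The reverse direction is the symmetric computation applied to $M^T$ (which is also symplectic), using that the row-$2$ condition translates to $M^T e_2 = e_{22}\, e_2$ and that $\omega_k(e_2,\cdot)$ extracts the negative of the first coordinate, which pins down column $1$ of $M$ as a unit multiple of $e_1$.

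The main obstacle I anticipate is precisely the dictionary between the \emph{column-}$1$ description of $\Psi_k(f)$ forced by the stabilizer condition (iv) and the \emph{row-}$2$ description given in (ii)--(iii). These conditions are not equivalent for arbitrary $2g \times 2g$ matrices over $\Z_k$; it is the symplectic constraint --- together with the identification $\bar\varphi = \omega_k(e_1,\cdot)$ made in the first paragraph --- that makes them so, and this is where the symplectic nature of the representation $\Psi$ does its essential work.
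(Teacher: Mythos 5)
Your proposal is correct; every step checks out, including the two places where real care is needed (upgrading ``$M(\ker\bar\varphi)\subseteq\ker\bar\varphi$'' to ``$\bar\varphi\circ M=\lambda\bar\varphi$ with $\lambda\in\Z_k^{\times}$'', and the row-versus-column dictionary via the symplectic form). But your route through the statement differs from the paper's in two respects. First, the paper proves (i)$\iff$(ii) directly: it reads off from Equation~(\ref{eqn:psharp}) that $\ker\bar\varphi=\langle e_1,ke_2,e_3,\dots,e_{2g}\rangle$, and observes that preservation of this subgroup is literally the row-$2$ divisibility condition, with $\gcd(d_{22},k)=1$ coming from the induced automorphism of the quotient $\Z_k$; no symplectic input is needed for this equivalence. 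Second, the paper obtains (i)$\iff$(iv) not by the duality $\bar\varphi=\omega_k(e_1,\cdot)$ but by setting up a chain of bijections between regular $\Z_k$-covers, index-$k$ subgroups, epimorphisms $\pi_1(S_g)\to\Z_k$, epimorphisms $H_1(S_g)\to\Z_k$, and unit-orbits $\{\ell v:\ell\in\Z_k^{\times}\}$ of primitive vectors in $H_1(S_g,\Z_k)$, and then invoking transitivity of the $\Mod(S_g)$-action on primitive vectors to identify $\LMod_{p_k}(S_g)$ with $\Stab_{\Mod(S_g)}(\{\ell e_1\})$. Your argument replaces that classification with a direct computation ($\bar\varphi\circ M=\lambda\bar\varphi$ iff $Me_1=\lambda^{-1}e_1$, by nondegeneracy of $\omega_k$), and then recovers (iii) from (iv) by applying the symplectic identity to rows and columns --- the same trick the paper uses later in the proof of its criterion for $\Mod_{p_k}(S_g,e_1)$. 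What your approach buys is economy and an explicit explanation of why the column-$1$ stabilizer condition and the row-$2$ matrix condition coincide, which the paper leaves implicit by proving both equivalences independently from (i). What the paper's approach buys is the transitivity-on-covers statement itself, which it reuses immediately afterward to compute $[\Mod(S_g):\LMod_{p_k}(S_g)]$ in Corollary~\ref{cor:index_lmod}; if you adopt your proof you would still want to record that correspondence separately for the index count.
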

\begin{proof}
First, we show that (i)$\iff$(ii). Consider an epimorphism $\varphi:\pi_1(S_{g}) \to \mathbb{Z}_k$ such that $\ker{(\varphi)} = (p_k)_{\ast}(\pi_1(S_{g_k}))$. By Theorem~\ref{thm:group_actions} and basic covering space theory, it follows that an $f \in \LMod(S_g)$ if and only if $f$ is represented by an $F \in \text{Homeo}^+(S_g)$ such that $F_{\ast}$ preserves the conjugacy class of $(p_k)_{\ast}(\pi_1(S_{g_k})) = \ker(\varphi)$. Since $\Z_k$ is abelian, this is equivalent to the condition that $F_{\#}(\ker{{\bar{\varphi}}})\subseteq \ker(\bar{\varphi})$, where $\bar{\varphi}$ is the induced map on the abelianization $H_1(S_g,\Z)$ of $\pi_1(S_g)$. Finally, from the definition of $(p_k)_{\#}$ (in Equation~(\ref{eqn:psharp}) in the preceding discussion), it is now clear that
$$\ker(\bar{\varphi}) = \langle e_1,k e_2,\dots,e_{2g-1},e_{2g}\rangle,$$
which proves our assertion.

The assertion (ii)$\iff$(iii) is apparent. We now show that $(i)\iff(iv)$ to complete the argument. We consider the following series of correspondences between the finite sets:
\begin{gather*}
\mathcal{C}_k = \{\text{Regular $k$-sheeted covers of $p_k: S_{g_k}\to S_g$ whose deck transformation group is $\Z_k$}\} \\
\updownarrow \\
\{\text{Endomorphisms $(p_k)_{\ast}:\pi_1(S_{g_k}) \hookrightarrow \pi_1(S_g)$ such that $\pi_1(S_g)/(p_k)_{\ast}(\pi_1(S_{g_k})) \cong \Z_k$}\} \\
\updownarrow \\
\{\{\text{Epimorphisms } \varphi :\pi_1(S_g) \to \Z_k : (p_k)_{\ast}(\pi_1(S_{g_k})) = \ker({\varphi})\}: p_k \in \mathcal{C}_k\} \\
\updownarrow \\
\{\{\text{Epimorphisms } \bar{\varphi} :H_1(S_g) \to \Z_k : (p_k)_{\#}(H_1(S_{g_k})) = \ker(\bar{\varphi})\}: p_k \in \mathcal{C}_k\} \\
\updownarrow \\
\{\{\ell v : \ell \in \Z_k^{\times} \}:  v \in H_1(S_g,\Z_k) \text{ is primitive}\}
\end{gather*}
Since $\Mod(S_g)$ acts transitively on primitive vectors in $H_1(S_g, \Z_k)$, it follows that it acts transitively on the covers in $\mathcal{C}_k$. Hence, for our cover $p_k$ (which induces $(p_k)_{\#}$ as described in Equation~\ref{eqn:psharp}), we have
$$\LMod_{p_k}(S_g) = \Stab_{\Mod(S_g)}(p_k) = \Stab_{\Mod(S_g)}(\{\ell e_1 : \ell \in \Z_k^{\times} \}),$$ and our assertion follows.
\end{proof}

\noindent We recall the notation from Section~\ref{sec:intro} that for $k \geq 2$, $\Mod(S_g)[k] = \ker(\Psi_k)$, the level-$k$ subgroup of $\Mod(S_g)$. We have the following immediate consequence of Theorem~\ref{thm:symp_lift_crit}.
\begin{corollary}\label{cor:torelli_lift}
For $g \geq 1$ and $k \geq 2$, we have $\Mod(S_g)[k] \lhd \LMod_{p_k}(S_g)$.
\end{corollary}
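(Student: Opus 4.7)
The plan is to obtain both assertions essentially for free from Theorem~\ref{thm:symp_lift_crit}. For any $f \in \Mod(S_g)[k] = \ker(\Psi_k)$, by definition $\Psi_k(f) = I_{2g}$, so $f$ acts as the identity on $H_1(S_g,\Z_k)$. In particular, $f$ fixes every vector, including each $\ell e_1$ with $\ell \in \Z_k^{\times}$, and hence $f$ lies in $\Stab_{\Mod(S_g)}(\{\ell e_1 : \ell \in \Z_k^{\times}\})$. This is precisely criterion (iv) of Theorem~\ref{thm:symp_lift_crit}, yielding the containment $\Mod(S_g)[k] \subseteq \LMod_{p_k}(S_g)$. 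Alternatively, one may verify criterion (iii) directly: the identity matrix trivially satisfies $e_{2i}=0$ for $i \neq 2$ and $e_{22}=1 \in \Z_k^{\times}$.

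For the normality assertion, observe that $\Mod(S_g)[k]$ is by definition the kernel of the group homomorphism $\Psi_k : \Mod(S_g) \to \Sp(2g,\Z_k)$, and is therefore a normal subgroup of the ambient group $\Mod(S_g)$. Normality is inherited by every intermediate subgroup, so in particular $\Mod(S_g)[k] \lhd \LMod_{p_k}(S_g)$, since the previous paragraph shows that $\LMod_{p_k}(S_g)$ contains $\Mod(S_g)[k]$.

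I do not anticipate any genuine obstacle for this corollary: it is a direct bookkeeping consequence of the symplectic criterion combined with the standard fact that kernels of homomorphisms are normal. The only minor check required is confirming that the identity matrix satisfies the row condition in Theorem~\ref{thm:symp_lift_crit}(iii), which it does trivially.
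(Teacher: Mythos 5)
Your proof is correct and matches the paper's intent: the paper states this corollary as an immediate consequence of Theorem~\ref{thm:symp_lift_crit} without further argument, and your verification that the identity matrix satisfies criterion (iii) together with the observation that a kernel is normal in any intermediate subgroup is exactly the intended reasoning.
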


\noindent The next corollary follows immediately from Theorem~\ref{thm:symp_lift_crit} (ii).

\begin{corollary}
For $g \geq 1$ and $k,\ell \geq 2$, we have
$$\LMod_{p_k}(S_g) \cap \LMod_{p_{\ell}}(S_g) = \LMod_{p_d}(S_g),$$ where
$d = \text{lcm}(k,\ell)$.
\end{corollary}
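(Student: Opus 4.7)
The plan is to reduce both sides of the equality to matrix conditions via the symplectic criterion of Theorem~\ref{thm:symp_lift_crit}(ii) and verify that the conditions match. Fix $f \in \Mod(S_g)$ and write $\Psi(f) = (d_{ij})_{2g \times 2g}$. The criterion says that $f \in \LMod_{p_k}(S_g)$ if and only if $k \mid d_{2i}$ for every $i \in \{1, \dots, 2g\} \setminus \{2\}$ and $\gcd(d_{22}, k) = 1$, with the analogous statement for $\ell$ and for $d = \mathrm{lcm}(k, \ell)$.

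First I would handle the divisibility conditions on the off-diagonal entries in the second row: $k \mid d_{2i}$ and $\ell \mid d_{2i}$ hold simultaneously if and only if $\mathrm{lcm}(k,\ell) \mid d_{2i}$, which is the standard property of least common multiples. Next I would treat the coprimality condition on $d_{22}$: since the set of prime divisors of $\mathrm{lcm}(k,\ell)$ is exactly the union of the prime divisors of $k$ and of $\ell$, we have $\gcd(d_{22}, k) = 1$ and $\gcd(d_{22}, \ell) = 1$ simultaneously if and only if $\gcd(d_{22}, \mathrm{lcm}(k, \ell)) = 1$. Combining these two observations, the intersection of the matrix conditions for $k$ and $\ell$ is precisely the matrix condition for $d$.

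Applying Theorem~\ref{thm:symp_lift_crit}(ii) in both directions then gives
\[
f \in \LMod_{p_k}(S_g) \cap \LMod_{p_\ell}(S_g) \iff f \in \LMod_{p_d}(S_g),
\]
which yields the desired equality of subgroups. I expect no substantive obstacle; the argument is purely a number-theoretic repackaging of the symplectic criterion, and the only point deserving care is recording that the coprimality condition behaves well under $\mathrm{lcm}$, which relies on the prime-factorization observation rather than on any property of $\Mod(S_g)$ or $\Sp(2g, \Z)$.
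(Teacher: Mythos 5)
Your argument is correct and is exactly the route the paper takes: the paper simply states that the corollary "follows immediately from Theorem~\ref{thm:symp_lift_crit}~(ii)," and your write-up supplies the elementary number-theoretic details (divisibility of the second-row entries under $\mathrm{lcm}$, and the behaviour of the coprimality condition on $d_{22}$) that the paper leaves implicit. No gaps.
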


\noindent Let $\displaystyle \UMod(S_g) := \bigcap_{k \geq 2} \LMod_{p_k}(S_g).$ From Corollary~\ref{cor:torelli_lift} and the fact that $\I(S_g) \lhd \Mod(S_g)[k]$, for each $k \geq 2$, it follows that the Torelli group $\I(S_g) \subset \UMod(S_g)$, for $g \ge 1$, and hence $\UMod(S_g) \neq \emptyset$. This leads us to another direct consequence of Theorem~\ref{thm:symp_lift_crit}.
\begin{corollary}
	Let $f \in \Mod(S_g)$, and let $\Psi(f) = (d_{ij})_{2g \times 2g}$. Then, $f \in \UMod(S_g)$ if and only if $d_{2i} = 0$, for $1\leq i\leq 2g$ and $i \neq 2$, and $d_{22}= \pm 1$.
\end{corollary}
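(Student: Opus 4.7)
The plan is to reduce the statement directly to Theorem~\ref{thm:symp_lift_crit}(ii) applied for every $k \geq 2$ simultaneously, using the definition $\UMod(S_g) = \bigcap_{k \geq 2} \LMod_{p_k}(S_g)$. An element $f$ lies in $\UMod(S_g)$ iff the symplectic criterion (ii) holds for all $k\geq 2$, that is, $k\mid d_{2i}$ for $1\leq i\leq 2g$, $i\neq 2$, and $\gcd(d_{22},k)=1$ simultaneously for every $k\geq 2$. The whole proof is then an elementary number-theoretic unpacking of this simultaneous condition.

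For the forward direction, I would argue as follows. The divisibility condition $k\mid d_{2i}$ holding for every $k\geq 2$ forces $d_{2i}=0$ (since a nonzero integer has only finitely many divisors, so e.g.\ taking $k=|d_{2i}|+1$ fails unless $d_{2i}=0$). For the $(2,2)$ entry, the condition $\gcd(d_{22},k)=1$ for all $k\geq 2$ forces $d_{22}$ to be coprime to every prime: if $p$ were a prime dividing $d_{22}$, taking $k=p$ would give $\gcd(d_{22},p)=p\neq 1$. Hence $d_{22}\in\{\pm 1\}$. The converse is immediate: if the entries satisfy $d_{2i}=0$ for $i\neq 2$ and $d_{22}=\pm 1$, then for any $k\geq 2$ the conditions $k\mid 0$ and $\gcd(\pm 1,k)=1$ hold trivially, so Theorem~\ref{thm:symp_lift_crit}(ii) places $f$ in $\LMod_{p_k}(S_g)$ for every $k$, and hence in the intersection $\UMod(S_g)$.

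There is no real obstacle here; the only thing worth being careful about is quantifier order (``for all $k$'' outside the symplectic criterion, rather than inside), and ensuring that the matrix entries $d_{ij}$ live in $\Z$ so that the ``$k\mid d_{2i}$ for all $k$'' step legitimately forces vanishing. The argument proves slightly more than stated: it characterizes $\UMod(S_g)$ as precisely $\Psi^{-1}(H)$, where $H\leq \Sp(2g,\Z)$ is the subgroup of symplectic matrices whose second row has the form $(0,\pm 1,0,\ldots,0)$, consistent with the earlier remark that the Torelli group $\I(S_g)$ is contained in $\UMod(S_g)$.
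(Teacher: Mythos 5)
Your proof is correct and is essentially the argument the paper intends: the corollary is stated as a direct consequence of Theorem~\ref{thm:symp_lift_crit}, and intersecting criterion (ii) over all $k \geq 2$ with the elementary observations that only $0$ is divisible by every $k\geq 2$ and only $\pm 1$ is coprime to every $k \geq 2$ is exactly the intended unpacking. Your attention to the quantifier order and to the fact that the entries lie in $\Z$ is appropriate but raises no issue.
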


\noindent Let $\phi$ denote the Euler totient function. The arguments in the proof of Theorem~\ref{thm:symp_lift_crit} together with Corollary~\ref{cor:torelli_lift} yield the following.

\begin{corollary}\label{cor:index_lmod}
Let $I_g = \Psi(\LMod_{p_k}(S_g))$ and $I_{g,k} = \Psi_k(\LMod_{p_k}(S_g))$. Then
\begin{enumerate}[(i)]
\item $\displaystyle [\Mod(S_g) : \LMod_{p_k}(S_g) ] = [\Sp(2g, \Z) : I_g ] = [\Sp(2g, \Z_k) : I_{g,k} ]$ and 
\item $\displaystyle [\Mod(S_g) : \LMod_{p_k}(S_g) ] = \frac{|\{\text{Primitive elements in }\Z_k^{2g} \}|}{\phi(k)}$. In particular, when $k$ is prime,
$$\displaystyle [\Mod(S_g) : \LMod_{p_k}(S_g) ] = \frac{k^{2g}-1}{k-1}.$$
\end{enumerate}
\end{corollary}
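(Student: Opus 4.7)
The plan is to deduce both statements by combining the characterization $\LMod_{p_k}(S_g) = \Stab_{\Mod(S_g)}(\{\ell e_1 : \ell \in \Z_k^{\times}\})$ from Theorem~\ref{thm:symp_lift_crit}(iv) with the orbit-stabilizer theorem, together with the surjectivity of the symplectic representation.

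For part (i), I would first note that $\Psi : \Mod(S_g) \to \Sp(2g,\Z)$ is surjective (a classical result) and that the reduction map $\Sp(2g,\Z) \to \Sp(2g,\Z_k)$ is surjective, so $\Psi_k$ is surjective as well. Then I would observe the chain of inclusions $\ker(\Psi) = \I(S_g) \subseteq \Mod(S_g)[k] = \ker(\Psi_k) \subseteq \LMod_{p_k}(S_g)$, where the last containment is Corollary~\ref{cor:torelli_lift}. Because $\LMod_{p_k}(S_g)$ is saturated under both $\ker(\Psi)$ and $\ker(\Psi_k)$, passing to the respective quotients gives
\[
[\Mod(S_g) : \LMod_{p_k}(S_g)] = [\Sp(2g,\Z) : I_g] = [\Sp(2g,\Z_k) : I_{g,k}],
\]
which is (i).

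For part (ii), I would exploit Theorem~\ref{thm:symp_lift_crit}(iv), which says that $\LMod_{p_k}(S_g)$ is exactly the stabilizer under $\Mod(S_g)$ of the subset $\O_{e_1} := \{\ell e_1 : \ell \in \Z_k^{\times}\} \subset H_1(S_g,\Z_k)$. The group $\Mod(S_g)$ acts (via $\Psi_k$) on the collection of such subsets $\O_v = \{\ell v : \ell \in \Z_k^{\times}\}$ indexed by primitive $v \in \Z_k^{2g}$, and by the transitivity of $\Mod(S_g)$ on primitive vectors noted in the introduction, it acts transitively on $\{\O_v : v \text{ primitive}\}$. Orbit-stabilizer then gives
\[
[\Mod(S_g) : \LMod_{p_k}(S_g)] = |\{\O_v : v \text{ primitive in } \Z_k^{2g}\}|.
\]
To convert this to the claimed formula, I would show that each $\O_v$ has cardinality exactly $\phi(k)$: if $\ell v = \ell' v$ in $\Z_k^{2g}$ with $v$ primitive, then $(\ell-\ell')v = 0$ forces $k \mid (\ell - \ell')$, so the $\Z_k^\times$-action on primitive vectors is free. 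Thus the primitive vectors partition into orbits of size $\phi(k)$, yielding the main formula.

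For the prime case, when $k$ is prime every nonzero vector in $\Z_k^{2g}$ is primitive, so there are $k^{2g}-1$ primitive vectors, and $\phi(k)=k-1$ gives the index $(k^{2g}-1)/(k-1)$. I do not anticipate a genuine obstacle here; the only point to treat carefully is verifying the free action of $\Z_k^{\times}$ on primitive vectors so that the count divides cleanly by $\phi(k)$.
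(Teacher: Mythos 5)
Your proposal is correct and follows essentially the same route the paper intends: the paper's one-line justification appeals precisely to the correspondence/transitivity argument in the proof of Theorem~\ref{thm:symp_lift_crit} (your orbit--stabilizer count on the sets $\{\ell v : \ell \in \Z_k^{\times}\}$, with the free $\Z_k^{\times}$-action giving the division by $\phi(k)$) together with Corollary~\ref{cor:torelli_lift} (your kernel-containment/saturation argument for part (i)). No gaps; your explicit verification that the $\Z_k^{\times}$-action on primitive vectors is free is a detail the paper leaves implicit.
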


\subsection{Criterion for $\Mod_{p_k}(S_g,e_1)$} Recalling from Section~\ref{sec:intro} that $\Mod_{p_k}(S_g,e_1) = \Stab_{\Mod(S_g)} (e_1)$, we have the following.

\begin{theorem}\label{thm:symp_Gamma1_crit}
Given an $f \in \Mod(S_g)$, the following statements are equivalent.
\begin{enumerate}[(i)]
\item $f \in \Mod_{p_k}(S_g,e_1)$.
\item If $\Psi(f) = (d_{ij})_{2g \times 2g}$, then $k|d_{i1}$ and $k | d_{2j}$, for $1\leq i,j\leq 2g$ when $i \ne 1$ and $j \ne 2$, and $d_{11}, d_{22} \equiv 1 \pmod{k}$.
\item If $\Psi_k(f) = (e_{ij})_{2g \times 2g}$, then $e_{11} = e_{22} = 1$ and $e_{i1} = e_{2j} = 0$ for $i \ne 1$ and $j \ne 2$.
\end{enumerate}
\end{theorem}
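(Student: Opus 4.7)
The plan is to prove the chain $(\mathrm{i}) \Leftrightarrow (\mathrm{iii}) \Leftrightarrow (\mathrm{ii})$. The equivalence $(\mathrm{ii}) \Leftrightarrow (\mathrm{iii})$ is immediate from the definition of $\Psi_k$ as $\Psi$ followed by entrywise reduction modulo $k$: the integer divisibility and congruence conditions in (ii) translate verbatim into the $\Z_k$-conditions in (iii).

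For the direction $(\mathrm{i}) \Rightarrow (\mathrm{iii})$, I would first unpack the definition: $f \in \Mod_{p_k}(S_g, e_1) = \Stab_{\Mod(S_g)}(e_1)$ precisely when $\Psi_k(f)\, e_1 = e_1$. Reading this off in the standard basis of $H_1(S_g, \Z_k)$ yields the first-column conditions $e_{11} = 1$ and $e_{i1} = 0$ for $i \ne 1$. So the task reduces to verifying that the second-row conditions in (iii) are automatic consequences of the symplectic structure.

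For this, I would exploit that $\Psi_k(f) \in \Sp(2g, \Z_k)$ preserves the standard algebraic intersection form $\omega$ on $H_1(S_g, \Z_k)$, normalized so that $\omega(e_{2i-1}, e_{2i}) = 1$ for $1 \le i \le g$ and $\omega$ vanishes on all other basis pairs. Under the assumption $\Psi_k(f)\, e_1 = e_1$, for each $1 \le j \le 2g$ one computes
$$\omega(e_1, e_j) = \omega\bigl(\Psi_k(f)\, e_1,\ \Psi_k(f)\, e_j\bigr) = \omega\bigl(e_1,\ \Psi_k(f)\, e_j\bigr) = e_{2j},$$
where the final equality expands $\Psi_k(f)\, e_j = \sum_i e_{ij}\, e_i$ and uses that $e_1$ pairs nontrivially only with $e_2$. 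Since $\omega(e_1, e_j)$ equals $1$ when $j = 2$ and $0$ otherwise, this forces $e_{22} = 1$ and $e_{2j} = 0$ for $j \ne 2$. The reverse implication $(\mathrm{iii}) \Rightarrow (\mathrm{i})$ is immediate, since the first-column conditions alone already give $\Psi_k(f)\, e_1 = e_1$.

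The main content of the argument, more a clean observation than a genuine obstacle, is recognizing that the second-row constraints are not additional hypotheses but are forced by symplecticity once the first column is specified. The only bookkeeping required is to fix the basis convention $a_1, b_1, \dots, a_g, b_g$ so that $\omega$-dual pairs are $(e_{2i-1}, e_{2i})$; with this in place, the whole argument reduces to a one-line application of the symplectic identity. This stabilizer-level analog of Theorem~\ref{thm:symp_lift_crit} is precisely what will power the construction of the normal series appearing in Theorem~\ref{thm:ModSg-series_intro}.
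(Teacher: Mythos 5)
Your proposal is correct and follows essentially the same route as the paper: read off the first column from $\Psi_k(f)e_1 = e_1$, then use preservation of the symplectic form to force the second row, with (ii)$\iff$(iii) being immediate from reduction modulo $k$. Your explicit computation $\omega(e_1, \Psi_k(f)e_j) = e_{2j} = \omega(e_1,e_j)$ is just a spelled-out version of the paper's appeal to invariance of the block-diagonal form $J$.
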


\begin{proof}
 Let $D = (d_{ij})_{2g \times 2g} \in \Sp(2g, \Z)$. Let $D'$ be the matrix $D$ modulo $k$. Since $D'(e_1)=e_1$, it follows that $[ \, d_{11} \, d_{21} \ldots  \,d_{(2g)1} \, ]^T = [ \, 1 \, 0 \ldots \,0 \, ]^T \pmod{k}.$ Moreover, $D$ preserves the bilinear form
$$J = \begin{bmatrix}
J_1 & O_2 & \ldots & O_2 \\
O_2 & J_1 & \ldots & O_2 \\
\vdots& \vdots  &  & \vdots \\
O_2 & O_2 & \ldots & J_1
\end{bmatrix},$$ 
where $J_1 = \begin{bmatrix}
0 & 1 \\
-1 & 0 \,
\end{bmatrix}$ and $O_2$ is the $2 \times 2$ zero matrix. Thus, we have
 $$ D' = \begin{bmatrix}
1&d_{12}& d_{13}&\ldots&d_{1(2g)}\\
0&1&0&\ldots& 0\\
\vdots&\vdots&\vdots&&\vdots\\
0&d_{(2g)2}&d_{(2g)3} & \ldots&d_{(2g)(2g)}\\
\end{bmatrix}. $$
 This shows (i)$\iff$(iii). The equivalence (ii)$\iff$(iii) is clear.
\end{proof}

\noindent We will use the following fact from basic group theory in the proof of the next corollary. 
\begin{lemma}
\label{lem:group_theory_fact}
If $\psi : G\to H$ is a group epimorphism and $N < G$ such that $N = \psi^{-1}(\psi(N))$, then $N \lhd G \iff \psi(N) \lhd H.$
\end{lemma}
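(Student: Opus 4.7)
The plan is to establish the two directions separately, noting that only the reverse implication uses the hypothesis $N = \psi^{-1}(\psi(N))$.

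For the forward direction, I would simply verify that the image of a normal subgroup under any surjective homomorphism is normal, a standard fact that requires no special hypothesis on $N$. Given $h \in H$, surjectivity of $\psi$ yields some $g \in G$ with $\psi(g) = h$, and then
\[
h\,\psi(N)\,h^{-1} = \psi(g)\psi(N)\psi(g)^{-1} = \psi(gNg^{-1}) = \psi(N),
\]
where the last equality uses $N \lhd G$.

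For the reverse direction, suppose $\psi(N) \lhd H$, and let $g \in G$ and $n \in N$. I would compute
\[
\psi(gng^{-1}) = \psi(g)\psi(n)\psi(g)^{-1} \in \psi(g)\,\psi(N)\,\psi(g)^{-1} = \psi(N),
\]
using normality of $\psi(N)$ in $H$. Consequently $gng^{-1} \in \psi^{-1}(\psi(N))$, and here I invoke the hypothesis $N = \psi^{-1}(\psi(N))$ to conclude $gng^{-1} \in N$. This gives $N \lhd G$.

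There is no real obstacle; the only conceptual point worth flagging is that the hypothesis $N = \psi^{-1}(\psi(N))$ (equivalently, $\ker(\psi) \subseteq N$) is essential in the reverse direction, since without it the argument would only place $gng^{-1}$ inside the possibly larger subgroup $\psi^{-1}(\psi(N))$ rather than $N$ itself.
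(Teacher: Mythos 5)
Your proof is correct and complete; the paper states this lemma without proof as a basic group-theoretic fact, and your argument is exactly the standard one a reader would supply. Your remark that the hypothesis $N = \psi^{-1}(\psi(N))$ (equivalently $\ker(\psi) \subseteq N$) is needed only for the reverse direction is accurate and a worthwhile observation.
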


\begin{corollary}
\label{cor:index_gamma1_in_lmod}
For $g \geq 1$ and $k \geq 2$, we have
\begin{enumerate}[(i)]
\item  $\Mod(S_g)[k] \lhd \Mod_{p_k}(S_g,e_1) \lhd \LMod_{p_k}(S_g)$, and
\item $\LMod_{p_k}(S_g)/\Mod_{p_k}(S_g,e_1) \cong \mathbb{Z}_k^{\times}$. Consequently, $$[\LMod_{p_k}(S_g):\Mod_{p_k}(S_g,e_1)] = \phi(k).$$
\end{enumerate}
\end{corollary}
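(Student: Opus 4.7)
The plan is to construct an explicit homomorphism $\eta : \LMod_{p_k}(S_g) \to \Z_k^{\times}$ whose kernel is exactly $\Mod_{p_k}(S_g, e_1)$; the first isomorphism theorem then yields both the outer normality in (i) and the quotient description in (ii) simultaneously, while the inner normality $\Mod(S_g)[k] \lhd \Mod_{p_k}(S_g, e_1)$ is immediate from $\Mod(S_g)[k] = \ker(\Psi_k) \lhd \Mod(S_g)$. I would open by verifying the containments: any element of $\Mod(S_g)[k]$ acts trivially on $H_1(S_g, \Z_k)$ and therefore fixes $e_1$, giving $\Mod(S_g)[k] \subset \Mod_{p_k}(S_g, e_1)$; and any element fixing $e_1$ certainly stabilizes the orbit $\{\ell e_1 : \ell \in \Z_k^{\times}\}$, so Theorem~\ref{thm:symp_lift_crit}(iv) yields $\Mod_{p_k}(S_g, e_1) \subset \LMod_{p_k}(S_g)$.

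Next, I would define $\eta(f)$ to be the unique $\ell \in \Z_k^{\times}$ with $f_{\#}(e_1) = \ell e_1$ in $H_1(S_g, \Z_k)$. Existence of $\ell$ is precisely the content of Theorem~\ref{thm:symp_lift_crit}(iv), and uniqueness follows from the primitivity of $e_1$ in $H_1(S_g, \Z_k)$. The fact that $\eta$ is a homomorphism is immediate from the $\Z_k$-linearity of the induced action on $H_1(S_g, \Z_k)$: if $\eta(f) = \ell$ and $\eta(g) = m$, then $(fg)_{\#}(e_1) = f_{\#}(m e_1) = m\ell e_1$. By construction $\ker(\eta) = \Mod_{p_k}(S_g, e_1)$, which simultaneously furnishes the normality of $\Mod_{p_k}(S_g, e_1)$ in $\LMod_{p_k}(S_g)$.

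The main obstacle is surjectivity of $\eta$: for each unit $\ell$ one must exhibit a liftable mapping class scaling $e_1$ by $\ell$. For this I would choose $\bar{\ell}$ with $\ell\bar{\ell} \equiv 1 \pmod{k}$ and analyze the candidate $\sigma_{\ell} := T_{b_1}^{\bar{\ell}-1} T_{a_1}^{-1} T_{b_1}^{\ell-1}$ (this is the representative appearing in the set $\S_k''$ of Theorem~\ref{thm:ModSg-series_intro}). Because $\sigma_{\ell}$ is supported in the subsurface spanned by $a_1, b_1$, its action on $H_1(S_g, \Z)$ is the identity on $a_j, b_j$ for $j \geq 2$, so only the $2 \times 2$ block in the $(a_1, b_1)$-coordinates requires attention. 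Iterating the standard formula for the homological action of a Dehn twist gives this block as $\begin{pmatrix} \ell & 1 \\ \ell\bar{\ell}-1 & \bar{\ell} \end{pmatrix}$, which has determinant $1$ and reduces modulo $k$ to the upper triangular matrix with diagonal $(\ell, \bar{\ell})$. This reduction satisfies the symplectic criterion of Theorem~\ref{thm:symp_lift_crit}(iii), so $\sigma_{\ell} \in \LMod_{p_k}(S_g)$, and clearly $\eta(\sigma_{\ell}) = \ell$, which would close the argument and deliver $\LMod_{p_k}(S_g)/\Mod_{p_k}(S_g, e_1) \cong \Z_k^{\times}$ of order $\phi(k)$.
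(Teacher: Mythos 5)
Your proof is correct and follows essentially the same route as the paper: both arguments exhibit an epimorphism from $\LMod_{p_k}(S_g)$ onto $\Z_k^{\times}$ (yours records the scalar $\ell$ with $f_{\#}(e_1)=\ell e_1$, the paper records the entry $e_{22}$ of $\Psi_k(f)$, which is just $\ell^{-1}$) and identify its kernel with $\Mod_{p_k}(S_g,e_1)$, obtaining normality and the quotient from the first isomorphism theorem. The only substantive difference is that you verify surjectivity explicitly via the elements $T_{b_1}^{\bar{\ell}-1}T_{a_1}^{-1}T_{b_1}^{\ell-1}$ (a computation the paper defers to Corollary~\ref{cor:quot_lmod}), and by working directly in $\Mod(S_g)$ rather than in $\Psi_k(\LMod_{p_k}(S_g))$ you avoid the appeal to Lemma~\ref{lem:group_theory_fact}.
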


\begin{proof}
From Theorem~\ref{thm:symp_Gamma1_crit}, it follows directly that $\Mod(S_g)[k] \lhd \Mod_{p_k}(S_g,e_1)$, for $k \geq 2$. Further, by definition of $\LMod_{p_k}(S_g)$, it is clear that
$\Mod_{p_k}(S_g,e_1) < \LMod_{p_k}(S_g)$. So it remains to show that $\Mod_{p_k}(S_g,e_1)$ is normal in $\LMod_{p_k}(S_g)$. By Lemma~\ref{lem:group_theory_fact}, this is equivalent to showing that $\Psi_k(\Mod_{p_k}(S_g,e_1)) \lhd \Psi_k(\LMod_{p_k}(S_g))$.

Given an $f \in \LMod_{p_k}(S_g)$, we showed in Theorem~\ref{thm:symp_lift_crit}, that $\Psi_k(f) = E = (e_{ij})_{2g \times 2g}$, where $e_{2i}=0$, for $1\leq i\leq 2g$ and $i \neq 2$, and $e_{22} \in \Z_k^{\times}$. As $E$ preserves the symplectic form $J$ (in the proof of Theorem~\ref{thm:symp_Gamma1_crit}), it further simplifies to a matrix of the form
$$E = \begin{bmatrix}
e_{11} & e_{12} & e_{13} & \ldots & e_{1(2g)} \\
0 & e_{22} & 0 & \ldots &  0 \\
\vdots & \vdots & \vdots & & \vdots \\
0 & e_{(2g)2} & e_{(2g)3} & \ldots & e_{(2g)(2g)}
\end{bmatrix},$$ where $e_{22} \in \Z_k^{\times}$.
Considering the epimorphism $\alpha : \Psi_k(\LMod_{p_k}(S_g)) \to \Z_k^{\times} : E \mapsto e_{22}$, we see that $\ker(\alpha)  = \Psi_k(\Mod_{p_k}(S_g,e_1))$. Therefore, the normality of $\Mod_{p_k}(S_g,e_1)$ and (ii) will now follow from Theorem~\ref{thm:symp_Gamma1_crit}.
\end{proof}

\section{Applications}
\label{sec:appl}
In this section, we will derive several applications of Theorem~\ref{thm:symp_lift_crit}.
\subsection{Normal series' of subgroups of $\Mod(S_g)$ and $B_3$} Putting together the results in Corollaries~\ref{cor:index_lmod} and~\ref{cor:index_gamma1_in_lmod}, we obtain the following generalization of classical normal series ($\dagger$) of congruence subgroups in $\SL(2,\Z)$.
\begin{theorem}\label{thm:norm_series_gen}
For $g \ge 1$ and $k \geq 2$, there exists a normal series of $\LMod_{p_k}(S_g)$ given by
$$ 1 \lhd \Mod(S_g)[k] \lhd \Mod_{p_k}(S_g, e_1) \lhd \LMod_{p_k}(S_g), \text{ where}$$ $$\LMod_{p_k}(S_g)/\Mod_{p_k}(S_g, e_1) \cong \Z_k^{\times} \text{ and }$$
$$[\Mod(S_g) : \Mod_{p_k}(S_g,e_1)] = | \{ \text{Primitive vectors in } \Z_k^{2g}\}|.$$
\end{theorem}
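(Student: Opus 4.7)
The plan is to assemble the statement by combining what has already been established in Corollaries~\ref{cor:index_lmod} and~\ref{cor:index_gamma1_in_lmod} with a single orbit--stabilizer computation. Indeed, Corollary~\ref{cor:index_gamma1_in_lmod}(i) already gives the normality chain $\Mod(S_g)[k] \lhd \Mod_{p_k}(S_g,e_1) \lhd \LMod_{p_k}(S_g)$, while Corollary~\ref{cor:index_gamma1_in_lmod}(ii) provides the isomorphism $\LMod_{p_k}(S_g)/\Mod_{p_k}(S_g,e_1) \cong \Z_k^{\times}$. So the entire content of the normal-series portion of the theorem is a direct restatement of those facts, and I will simply cite them.

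The only genuinely new assertion is the index formula
\[ [\Mod(S_g) : \Mod_{p_k}(S_g,e_1)] = |\{\text{Primitive vectors in } \Z_k^{2g}\}|. \]
For this, I would invoke the definition $\Mod_{p_k}(S_g,e_1) = \Stab_{\Mod(S_g)}(e_1)$ with $e_1 \in H_1(S_g,\Z_k)$, and apply the orbit--stabilizer theorem to the action of $\Mod(S_g)$ on $H_1(S_g,\Z_k)$ induced by $\Psi_k$. This reduces the claim to identifying the orbit of $e_1$ with the set of primitive vectors of $\Z_k^{2g}$. Since $e_1$ itself is primitive and $\Psi_k$ preserves primitivity, the orbit is contained in the set of primitive vectors; conversely, as recalled in Section~\ref{sec:intro}, $\Mod(S_g)$ acts transitively on primitive vectors in $H_1(S_g,\Z_k)$ (via the transitive action on nonseparating simple closed curves in $S_g$), so this containment is an equality.

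There is no real obstacle here, since every ingredient is already in place; the only point that requires a sentence of care is the justification that $\Mod(S_g)$ acts transitively on primitive vectors of $\Z_k^{2g}$, which was asserted in the introduction and which I would reiterate explicitly at the moment it is invoked. With that, the orbit--stabilizer count closes the argument.
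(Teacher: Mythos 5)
Your proposal is correct and matches the paper's (very brief) argument, which likewise just assembles Corollaries~\ref{cor:index_lmod} and~\ref{cor:index_gamma1_in_lmod}. The only cosmetic difference is that the paper obtains the index formula by multiplying $[\Mod(S_g):\LMod_{p_k}(S_g)] = |\{\text{primitive vectors}\}|/\phi(k)$ from Corollary~\ref{cor:index_lmod}(ii) with $[\LMod_{p_k}(S_g):\Mod_{p_k}(S_g,e_1)]=\phi(k)$, whereas you run the orbit--stabilizer computation for $\Stab_{\Mod(S_g)}(e_1)$ directly; both rest on the same transitivity fact and are equally valid.
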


Let $S_{1,1}$ denote $S_1$ with one marked point and let $S_1^1$ denote $S_1$ with one boundary component. Let $B_n$ denote the braid group on $n$ strands. Since $\Mod(S_{1,1}) \cong \Mod(S_1)$ and $\Mod(S_1^1) \cong B_3$, the natural capping map $S_1^1 \rightarrow S_{1,1}$ induces an epimorphism $cap: \Mod(S_1^1) \to \Mod(S_1)$, which yields an exact sequence: 
$$1 \to \langle T_s \rangle \to B_3 \xrightarrow{cap} \Mod(S_1) \to 1,$$ where $s$ represents the isotopy class of $\partial S_1^1$. Pulling back the classical normal series ($\dagger$) (i.e the normal series in Theorem~\ref{thm:norm_series_gen} for $g=1$) under the $cap$ homomorphism, we obtain an analogous normal series of subgroups in $B_3$.
\begin{corollary}\label{cor:norm_series_B3}
For $k \geq 2$, there exists a normal series of $cap^{-1}(\Gamma_0(k))$ in $B_3$ given by
$$1 \lhd cap^{-1}(\Gamma(k)) \lhd  cap^{-1}(\Gamma_1(k)) \lhd cap^{-1}(\Gamma_0(k)), \text{ where}$$
$$cap^{-1}(\Gamma_0(k))/cap^{-1}(\Gamma_1(k)) \cong \Z_k^{\times} \text{ and }$$
$$[B_3 : cap^{-1}(\Gamma_1(k))] = | \{ \text{Primitive vectors in } \Z_k^2\}|.$$
\end{corollary}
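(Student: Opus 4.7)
The plan is to treat the corollary as a formal consequence of the lattice/correspondence theorem applied to the epimorphism $cap: B_3 \to \Mod(S_1)$, together with the $g=1$ instance of Theorem~\ref{thm:norm_series_gen}. First I would recall that $cap$ is surjective with kernel $\langle T_s \rangle$ and that, under the standard identification $\Mod(S_1) \cong \SL(2,\Z)$ (i.e.\ the symplectic representation $\Psi$ is an isomorphism when $g=1$), we have $\Psi(\Mod_{p_k}(S_1,e_1)) = \Gamma_1(k)$, and the full classical series $1 \lhd \Gamma(k) \lhd \Gamma_1(k) \lhd \Gamma_0(k)$ appears as the bottom row of the normal series in Theorem~\ref{thm:norm_series_gen} for $g=1$.

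Next I would use the correspondence theorem: for any surjective group homomorphism $\psi\colon G \to H$ and any subgroups $N \lhd K \le H$, one has $\psi^{-1}(N) \lhd \psi^{-1}(K)$, with $\psi^{-1}(K)/\psi^{-1}(N) \cong K/N$ and $[G : \psi^{-1}(K)] = [H : K]$. Applying this to $\psi = cap$ and the chain $\Gamma(k) \lhd \Gamma_1(k) \lhd \Gamma_0(k)$ in $\Mod(S_1)$ immediately yields the chain
$$cap^{-1}(\Gamma(k)) \lhd cap^{-1}(\Gamma_1(k)) \lhd cap^{-1}(\Gamma_0(k))$$
in $B_3$. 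The leftmost inclusion $1 \lhd cap^{-1}(\Gamma(k))$ is automatic since $cap^{-1}(\Gamma(k))$ contains $\ker(cap)$ and is a subgroup of $B_3$.

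Finally, the quotient computation follows from the correspondence theorem: since $\ker(cap) \subseteq cap^{-1}(\Gamma_1(k))$, we get
$$cap^{-1}(\Gamma_0(k))/cap^{-1}(\Gamma_1(k)) \;\cong\; \Gamma_0(k)/\Gamma_1(k) \;\cong\; \Z_k^{\times},$$
where the second isomorphism is the classical fact recalled in the introduction. Similarly, $[B_3 : cap^{-1}(\Gamma_1(k))] = [\Mod(S_1) : \Gamma_1(k)]$, and by the $g=1$ case of Theorem~\ref{thm:norm_series_gen} this index equals $|\{\text{Primitive vectors in } \Z_k^2\}|$.

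I do not expect a genuine obstacle here: the entire argument is bookkeeping with the correspondence theorem once one observes that $cap$ is surjective and that $\Psi$ is an isomorphism in genus one. The only point that requires any care is making sure that the appearance of the classical series ($\dagger$) inside $\Mod(S_1)$ really is the image of the $g=1$ series under $\Psi$; this is immediate from the definition $\Mod_{p_k}(S_1,e_1) = \Stab_{\Mod(S_1)}(e_1)$ combined with Theorem~\ref{thm:symp_Gamma1_crit}, which gives $\Psi(\Mod_{p_k}(S_1,e_1)) = \Gamma_1(k)$, and analogously identifies $\Psi(\Mod(S_1)[k]) = \Gamma(k)$ and $\Psi(\LMod_{p_k}(S_1)) = \Gamma_0(k)$.
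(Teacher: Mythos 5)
Your proposal is correct and is essentially the paper's own argument: the paper obtains this corollary precisely by pulling back the $g=1$ instance of Theorem~\ref{thm:norm_series_gen} (the classical series ($\dagger$)) through the epimorphism $cap\colon B_3 \to \Mod(S_1)$, which is exactly your correspondence-theorem bookkeeping. The identifications $\Psi(\Mod(S_1)[k]) = \Gamma(k)$, $\Psi(\Mod_{p_k}(S_1,e_1)) = \Gamma_1(k)$, and $\Psi(\LMod_{p_k}(S_1)) = \Gamma_0(k)$ that you flag as the one point needing care are likewise how the paper sets things up.
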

In the theory of modular forms, finite generating sets for the congruence subgroups $\Gamma(k)$, $\Gamma_1(k)$, and $\Gamma_0(k)$ are well understood (see ~\cite[Lemma 12]{DKMO} for an explicit generating set for $\Gamma_0(k)$ and~\cite[Proposition 1.17]{WS} for a computational approach for deriving generators for any congruence subgroup). Since $B_3/\langle T_s \rangle \cong \Mod(S_1)$, by including $T_s$ into these generating sets, we can obtain finite generating sets for all the component groups appearing in the normal series in Corollary ~\ref{cor:norm_series_B3}. 

It follows from Theorem~\ref{thm:symp_Gamma1_crit} and Corollary~\ref{cor:index_gamma1_in_lmod} that the epimorphism $\Psi_k$ induces an 
isomorphism $$\bar{\Psi}_k : \LMod_{p_k}(S_g)/ \Mod_{p_k}(S_g,e_1) \to \Psi_k(\LMod_{p_k}(S_g))/\Psi_k(\Mod_{p_k}(S_g,e_1)),$$ where the quotient groups on either side are isomorphic to $\Z_k^{\times}$. It is apparent that for each $\ell \in \Z_k^{\times}$ (with multiplicative inverse $\bar{\ell}$), the matrix $Q_{\ell} = ((q_{\ell})_{ij})_{2g \times 2g} \in \Sp(2g,\Z_k)$, where
$$(q_{\ell})_{ij} = \begin{cases}
\ell, & \text{if } i = j=1,\\
\bar{\ell}, & \text{if } i =  j=2, \\
1, & \text{if } i = j > 2, \text{ and}\\
0, &\text{otherwise,}
\end{cases}$$
represents a coset of $\Psi_k(\LMod_{p_k}(S_g))/\Psi_k(\Mod_{p_k}(S_g,e_1))$. Moreover, each coset represented by a $Q_{\ell}$ pulls back under $\bar{\Psi}_k$ to a coset represented by 
$\varphi_{\ell} = T_{b_1}^{\bar{\ell}-1} T_{a_1}^{-1}T_{b_1}^{\ell-1}$ in $\LMod_{p_k}(S_g)/ \Mod_{p_k}(S_g,e_1)$. Thus, we have the following corollary. 

\begin{corollary}
\label{cor:quot_lmod}
For $g \geq 1$ and $k \geq 2$, the distinct cosets of $\LMod_{p_k}(S_g)/\Mod_{p_k}(S_g, e_1)$ are represented by the elements in 
$$\S_k'' := \{\varphi_\ell : \ell \in \Z_k^{\times} \}.$$
\end{corollary}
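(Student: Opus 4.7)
The plan is to transport the problem through the isomorphism
\[\bar{\Psi}_k : \LMod_{p_k}(S_g)/\Mod_{p_k}(S_g,e_1) \xrightarrow{\sim} \Psi_k(\LMod_{p_k}(S_g))/\Psi_k(\Mod_{p_k}(S_g,e_1)) \cong \Z_k^{\times}\]
established in Corollary~\ref{cor:index_gamma1_in_lmod}, under which the class of $f$ corresponds to the $(2,2)$-entry of $\Psi_k(f)$ via the epimorphism $\alpha$ of that proof. First I would verify that $\{Q_\ell : \ell \in \Z_k^\times\}$ is a full system of coset representatives on the symplectic side: each $Q_\ell$ is symplectic, satisfies the structural constraint of Theorem~\ref{thm:symp_lift_crit}(iii), and has $(2,2)$-entry $\bar\ell$; since $\ell \mapsto \bar\ell$ is a bijection of $\Z_k^\times$, the $\phi(k)$ matrices $Q_\ell$ hit each of the $\phi(k)$ cosets exactly once.

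The crux of the proof is to show $\bar{\Psi}_k$ sends $\varphi_\ell\,\Mod_{p_k}(S_g,e_1)$ to $Q_\ell\,\Psi_k(\Mod_{p_k}(S_g,e_1))$, i.e.\ that $\Psi_k(\varphi_\ell)$ has $(2,2)$-entry $\bar\ell$. Because $a_1$ and $b_1$ are disjoint from every $a_i,b_i$ with $i\geq 2$, the matrices $\Psi_k(T_{a_1}^{\pm 1})$ and $\Psi_k(T_{b_1}^{\pm 1})$ are the identity outside the $2\times 2$ block indexed by $\{e_1,e_2\}$, so the whole calculation reduces to a product in $\SL(2,\Z_k)$. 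Using the standard transvection formulas $\Psi_k(T_{b_1})|_{\langle e_1,e_2\rangle} = \left[\begin{smallmatrix}1&0\\1&1\end{smallmatrix}\right]$ and $\Psi_k(T_{a_1}^{-1})|_{\langle e_1,e_2\rangle} = \left[\begin{smallmatrix}1&1\\0&1\end{smallmatrix}\right]$, I would multiply out the three factors of $\varphi_\ell$ and invoke $\ell\bar\ell \equiv 1\pmod{k}$ to collapse the $(2,1)$-entry to $0$ and produce $\bar\ell$ in the $(2,2)$ slot, precisely matching $Q_\ell$.

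The conclusion is then formal: $\bar{\Psi}_k$ is a bijection that carries the coset of $\varphi_\ell$ to the coset of $Q_\ell$, and the $Q_\ell$'s exhaust the cosets on the symplectic side, so the $\varphi_\ell$'s exhaust the cosets of $\LMod_{p_k}(S_g)/\Mod_{p_k}(S_g,e_1)$. The main obstacle is nothing deeper than careful bookkeeping of sign and composition-order conventions for the Dehn twist action on $H_1$ in the transvection calculation; once those are fixed, the argument is essentially a formalization of the matrix discussion immediately preceding the statement.
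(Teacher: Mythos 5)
Your proposal is correct and follows essentially the same route as the paper: the paper likewise passes through the isomorphism $\bar{\Psi}_k$, exhibits the matrices $Q_\ell$ as coset representatives on the symplectic side, and identifies the coset of $\varphi_\ell$ with that of $Q_\ell$ via the $(2,2)$-entry, leaving the transvection computation implicit where you spell it out. That computation does produce $\bar{\ell}$ in the $(2,2)$ slot and $1-\ell\bar{\ell}\equiv 0 \pmod{k}$ in the $(2,1)$ slot under the paper's sign conventions, so the argument goes through.
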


For $g \geq 1$, let $\iota \in \Mod(S_g)$ denote the hyperelliptic involution, and let $I_{2g} \in \Sp(2g,\Z_k)$ be the identity matrix. Since $\Psi_k(\iota) = -I_{2g}$, it follows from Theorem~\ref{thm:symp_Gamma1_crit} that $\iota \in \Mod_{p_k}(S_g, e_1)$ if and only if $k=2$. Moreover, for $k=2$, it is apparent from Theorem~\ref{thm:norm_series_gen} that $\LMod_{p_k}(S_g) = \Mod_{p_k}(S_g,e_1)$. Thus, as $\Psi_k(\iota)$ commutes with every element of $\Sp(2g,\Z_k)$, we obtain the following extension of the normal series in Theorem~\ref{thm:norm_series_gen}. 

\begin{corollary}
\label{cor:norm_ser_ext}
For $g \geq 1$ and $k \geq 3$, there exists a normal series of $\LMod_{p_k}(S_g)$ given by
$$ 1 \lhd \Mod(S_g)[k] \lhd \Mod_{p_k}(S_g, e_1) \lhd \langle \langle \Mod_{p_k}(S_g,e_1), \iota \rangle \rangle \lhd \LMod_{p_k}(S_g), \text{ where}$$ $\langle \langle \Mod_{p_k}(S_g,e_1), \iota \rangle \rangle$ denotes the normal closure of $\langle \Mod_{p_k}(S_g,e_1), \iota \rangle$ in $\LMod_{p_k}(S_g)$, 
$$\langle \langle \Mod_{p_k}(S_g,e_1), \iota \rangle \rangle/\Mod_{p_k}(S_g, e_1) \cong \Z_2 \text{ and }$$
$$[\LMod_{p_k}(S_g) : \langle \langle \Mod_{p_k}(S_g,e_1), \iota \rangle \rangle] = \phi(k)/2.$$
\end{corollary}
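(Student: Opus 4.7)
The plan is to exploit the fact that $\Psi(\iota) = -I_{2g}$, an element central in $\Sp(2g, \Z)$ and in $\Sp(2g, \Z_k)$, and combine this centrality with the symplectic criteria and index computations already established.

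First I will locate $\iota$ inside the existing chain. From $\Psi(\iota) = -I_{2g}$ and Theorem~\ref{thm:symp_lift_crit}(ii), we get $d_{22} = -1 \in \Z_k^{\times}$ and $d_{2i} = 0$ for $i \neq 2$, so $\iota \in \LMod_{p_k}(S_g)$ for every $k \geq 2$. Theorem~\ref{thm:symp_Gamma1_crit}(ii) demands $d_{11} \equiv 1 \pmod k$, which fails for $-I_{2g}$ when $k \geq 3$; hence $\iota \notin \Mod_{p_k}(S_g, e_1)$ for such $k$, and therefore $\langle\Mod_{p_k}(S_g,e_1), \iota\rangle$ properly contains $\Mod_{p_k}(S_g, e_1)$.

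Next I will show that the normal closure equals the internal product $\Mod_{p_k}(S_g,e_1)\cdot\langle\iota\rangle$. For any $f \in \LMod_{p_k}(S_g)$, centrality of $\Psi_k(\iota) = -I_{2g}$ in $\Sp(2g,\Z_k)$ gives $\Psi_k(f\iota f^{-1}) = \Psi_k(\iota)$, so $f\iota f^{-1}\iota^{-1} \in \ker(\Psi_k) = \Mod(S_g)[k]$. By Corollary~\ref{cor:index_gamma1_in_lmod}(i), $\Mod(S_g)[k] \subseteq \Mod_{p_k}(S_g, e_1)$, whence $f\iota f^{-1} \in \Mod_{p_k}(S_g,e_1)\cdot\langle\iota\rangle$. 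Combined with the normality of $\Mod_{p_k}(S_g,e_1)$ in $\LMod_{p_k}(S_g)$, this shows that $\Mod_{p_k}(S_g,e_1)\cdot\langle\iota\rangle$ is itself normal in $\LMod_{p_k}(S_g)$, so the normal closure collapses to this product.

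For the quotient and the index, since $\iota^2 = 1$ and $\iota \notin \Mod_{p_k}(S_g,e_1)$, the coset $\iota\,\Mod_{p_k}(S_g,e_1)$ has order exactly $2$, giving $\langle\langle \Mod_{p_k}(S_g,e_1), \iota\rangle\rangle/\Mod_{p_k}(S_g, e_1) \cong \Z_2$. Multiplying indices with Corollary~\ref{cor:index_gamma1_in_lmod}(ii) yields $[\LMod_{p_k}(S_g) : \langle\langle \Mod_{p_k}(S_g,e_1), \iota\rangle\rangle] = \phi(k)/2$, an integer since $\phi(k)$ is even for $k \geq 3$. The remaining normal inclusions in the chain are immediate from Corollary~\ref{cor:index_gamma1_in_lmod}(i).

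The main obstacle I anticipate is the normal-closure identification. It requires both the centrality of $-I_{2g}$ modulo $k$ (to control $\Psi_k(f\iota f^{-1})$) and the containment $\Mod(S_g)[k] \subseteq \Mod_{p_k}(S_g,e_1)$ (to keep the "error" $f\iota f^{-1}\iota^{-1}$ inside the correct subgroup). Without the latter, the conjugate $f\iota f^{-1}$ could a priori stray outside $\Mod_{p_k}(S_g,e_1)\cdot\langle\iota\rangle$, enlarging the normal closure and disrupting both the quotient and the index computations.
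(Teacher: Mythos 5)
Your proposal is correct and follows essentially the same route as the paper, which deduces the result from $\Psi_k(\iota)=-I_{2g}$ being central in $\Sp(2g,\Z_k)$ together with the observation that $\iota\notin\Mod_{p_k}(S_g,e_1)$ for $k\geq 3$. Your explicit verification that $f\iota f^{-1}\iota^{-1}\in\ker(\Psi_k)=\Mod(S_g)[k]\subseteq\Mod_{p_k}(S_g,e_1)$, so that the normal closure collapses to $\Mod_{p_k}(S_g,e_1)\cdot\langle\iota\rangle$, merely spells out the step the paper leaves implicit.
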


\noindent From Corollary~\ref{cor:norm_ser_ext}, it follows that $\LMod_{p_k}(S_g) \cong \langle \langle \Mod_{p_k}(S_g,e_1), \iota \rangle \rangle$ if and only if $\phi(k)=2$. Hence, we have the following. 

\begin{corollary}
$\langle \langle \Mod_{p_k}(S_g,e_1) ,\iota \rangle \rangle = \LMod_{p_k}(S_g)$ if and only if $k \in \{3,4,6\}$.
\end{corollary}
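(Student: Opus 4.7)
The entire substance of this statement has already been packaged into the preceding corollary, so my plan is simply to translate the index formula of Corollary~\ref{cor:norm_ser_ext} into a number-theoretic condition and then enumerate its solutions. Concretely, Corollary~\ref{cor:norm_ser_ext} asserts that for $k \geq 3$ one has $[\LMod_{p_k}(S_g) : \langle \langle \Mod_{p_k}(S_g,e_1),\iota\rangle\rangle] = \phi(k)/2$. Equality of the two groups is equivalent to this index being $1$, which is equivalent to the purely numerical condition $\phi(k) = 2$. So the task reduces to classifying the integers $k \geq 3$ with $\phi(k) = 2$.

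For the classification I would use the multiplicativity of $\phi$ together with $\phi(p^a) = p^{a-1}(p-1)$. Writing $k = 2^{a_0} p_1^{a_1}\cdots p_r^{a_r}$ with $p_1,\dots,p_r$ distinct odd primes, one gets $\phi(k) = 2^{\max(0,a_0-1)}\prod_{i=1}^{r}p_i^{a_i-1}(p_i-1)$. Since every odd prime $p$ contributes a factor $p-1 \geq 2$, the condition $\phi(k) = 2$ forces $r \leq 1$; if $r=1$ it further forces $a_1 = 1$ and $p_1 - 1 \leq 2$, hence $p_1 = 3$. A brief case split on $a_0$ with $r \in \{0,1\}$ yields exactly the three possibilities $k = 3$ (from $a_0 = 0$, $p_1 = 3$), $k = 4$ (from $a_0 = 2$, $r = 0$), and $k = 6$ (from $a_0 = 1$, $p_1 = 3$); all other combinations give $\phi(k) \geq 4$ or $\phi(k) = 1$. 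A direct check $\phi(3) = \phi(4) = \phi(6) = 2$ confirms sufficiency, completing the proof.

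There is no real obstacle here — the heavy lifting (identifying the index as $\phi(k)/2$, which in turn rests on Theorem~\ref{thm:symp_lift_crit} and Theorem~\ref{thm:symp_Gamma1_crit}) has already been done. The corollary is best viewed as a clean combinatorial consequence of Corollary~\ref{cor:norm_ser_ext} rather than as an independent result requiring new ideas.
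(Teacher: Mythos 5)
Your proposal is correct and follows essentially the same route as the paper: the authors likewise observe that Corollary~\ref{cor:norm_ser_ext} gives index $\phi(k)/2$, so equality holds iff $\phi(k)=2$, i.e.\ iff $k\in\{3,4,6\}$. Your explicit classification of the solutions of $\phi(k)=2$ just fills in a detail the paper leaves implicit.
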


\subsection{Generating $\Mod_{p_k}(S_g,e_1)$ and $\LMod_{p_k}(S_g)$}
Let $T_c$ denote the left-handed Dehn twist about a simple closed curve $c$ in $S_g$. The Lickorish generating set for $\Mod(S_g)$ is given by the set of Dehn twists
$$\mathcal{L}_g= \{T_{a_1},T_{b_1}, \ldots,T_{a_g},T_{b_g},T_{c_1},\ldots,T_{c_{g-1}}\},$$ where the curves $a_i,b_i$, for $1 \leq i \leq g$, and $c_j$, for $1 \leq j \leq g-1$ are as indicated in Figure~\ref{fig:Sgk_free_hom} in Section~\ref{sec:intro}. Since $[\Mod(S_g) : \Mod(S_g)[k]] < \infty$ for $k \geq 2$, $\Mod(S_g)[k]$ is clearly finitely generated. However, explicit generating sets for $\Mod(S_g)[k]$ are not known, except when $k=2$ (see~\cite{NJF} and references therein). So, for our purposes, we shall simply assume the existence of a finite set $\S_{g,k}$ of words in $\mathcal{L}_g$ generating $\Mod(S_g)[k]$. Let $i : S_1^1 \hookrightarrow S_g$ be an inclusion such that $\pi_1(i(S_1^1)) = \langle a_1,b_1 \rangle$. Then $i$ induces an inclusion $\hat{i}: \Mod(S_1^1) (\cong B_3) \hookrightarrow \Mod(S_g)$, for $g \geq 2$. Let $cap:B_3 = \langle T_{a_1},T_{b_1} \rangle \to \Mod(S_1) = \langle T_{a_1'},T_{b_1'} \rangle$ be the epimorphism (described in Corollary~\ref{cor:norm_series_B3}) which maps the twists $T_{a_1}\text{ and }T_{b_1}$ to the twists $T_{a_1'} \text{ and }T_{b_1'}$, respectively. Let $\S_k' = \{w_1(T_{a_1'},T_{b_1'}), \ldots, w_{n_k}(T_{a_1'},T_{b_1'})\}$ be a finite set of words in the letters $T_{a_1'},T_{b_1'}$ generating $\Mod_{p_k}(S_1,e_1) = \Gamma_1(k)$ in $\Mod(S_1)$ (as detailed in~\cite{WS}). We consider the corresponding set $\tilde{\S}_k = \{w_1(T_{a_1},T_{b_1}), \ldots, w_{n_k}(T_{a_1},T_{b_1})\}\subset B_3$ of representatives of the distinct cosets of $cap^{-1}(\Gamma_1(k))/\ker(cap)$. With this notation in place, we have the following.

\begin{theorem}
\label{thm:lmod_gen_set}
For $g,k \geq 2$, 
$$\Mod_{p_k}(S_g,e_1) = \langle \hat{i}(\tilde{\S}_k) \cup \S_{g,k} \cup \{T_{a_2},\ldots,T_{a_g},T_{b_2}, \ldots,T_{b_g},T_{c_1},\ldots,T_{c_{g-1}} \} \rangle.$$
\end{theorem}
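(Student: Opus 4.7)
My plan is to prove the two containments separately, using the mod-$k$ symplectic representation $\Psi_k$ as the organizing tool.

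For the containment $\supseteq$, I would apply the criterion of Theorem~\ref{thm:symp_Gamma1_crit} to each listed generator and verify that its $\Psi_k$-image fixes $e_1 \in H_1(S_g, \Z_k)$. The set $\S_{g,k}$ generates $\Mod(S_g)[k] = \ker \Psi_k$, which is contained in $\Mod_{p_k}(S_g, e_1)$ by Corollary~\ref{cor:index_gamma1_in_lmod}. Each Dehn twist $T_{a_j}, T_{b_j}$ for $j \geq 2$, and each $T_{c_j}$ for $1 \leq j \leq g-1$, is supported on a curve whose algebraic intersection with $a_1$ is zero, so it fixes $e_1$ integrally. Finally, the elements of $\hat{i}(\tilde{\S}_k)$ are by construction pre-images under $cap$ of a generating set of $\Gamma_1(k) = \Mod_{p_k}(S_1, e_1)$, and $\hat{i}$ identifies $\Mod(S_1^1) \cong B_3$ with mapping classes supported on the $\{a_1, b_1\}$-handle of $S_g$, so their $\Psi_k$-images fix $e_1$ on that block and act trivially on $e_3, \ldots, e_{2g}$.

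For the containment $\subseteq$, let $G$ denote the subgroup on the right-hand side and fix $f \in \Mod_{p_k}(S_g, e_1)$. Since $\Mod(S_g)[k] = \langle \S_{g,k} \rangle \subset G$, it suffices to show that $\Psi_k(G) = \Psi_k(\Mod_{p_k}(S_g, e_1))$: any $h \in G$ with $\Psi_k(h) = \Psi_k(f)$ yields $f = h \cdot (h^{-1}f)$ with $h^{-1}f \in \ker \Psi_k \subset G$. By Theorem~\ref{thm:symp_Gamma1_crit}, the matrix $E := \Psi_k(f)$ has first column $e_1$ and second row $e_2^T$, and thus, relative to the splitting $\Z_k^{2g} = \langle e_1, e_2 \rangle \oplus V$ with $V = \langle e_3, \ldots, e_{2g} \rangle$, decomposes into a block $M \in \Gamma_1(k)$ on $\langle e_1, e_2 \rangle$, a block $F \in \Sp(2g-2, \Z_k)$ on $V$, and constrained mixing vectors. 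I would then reduce $E$ to the identity in two stages: first, using the $\Psi_k$-images of $T_{a_j}, T_{b_j}$ ($j \geq 2$) and $T_{c_j}$---which realize $\Sp(2g-2, \Z_k)$ on $V$ together with transvections along vectors in $\langle e_1 \rangle \oplus V$---to clear $F$ and the mixing and reduce $E$ to a block-diagonal matrix $\mathrm{diag}(M, I_{2g-2})$; second, using a word in $\hat{i}(\tilde{\S}_k)$, whose $\Psi_k$-images generate $\Gamma_1(k)$ on the $\{a_1, b_1\}$-block and act trivially on $V$, to clear $M$.

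The main obstacle I anticipate is the first reduction stage: I must show that suitable commutators and conjugates of $T_{c_j}$ with $T_{a_j}, T_{b_j}$ ($j \geq 2$) realize every transvection of $\Sp(2g, \Z_k)$ along a vector in $\langle e_1 \rangle \oplus V$, so that the first-row entries $e_{1j}$ ($j \geq 3$) and the second-column entries $e_{i2}$ ($i \geq 3$) of $E$ can be cleared. A clean organizing principle is to identify the subgroup generated by these Lickorish twists with the image in $\Mod(S_g)$ of the mapping class group of the subsurface complementary to the $\{a_1, b_1\}$-handle, and then apply an analog of Theorem~\ref{thm:symp_lift_crit} to show that its $\Psi_k$-image is exactly the Levi-plus-unipotent factor of $\Sp(2g, \Z_k)$ stabilizing $e_1$ pointwise. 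Once that is established, the passage to the $\{a_1, b_1\}$-block via $\hat{i}(\tilde{\S}_k)$ is formal.
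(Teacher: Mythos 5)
Your proposal follows essentially the same route as the paper: both reduce modulo $\Mod(S_g)[k]=\ker(\Psi_k)\subset\langle\S_{g,k}\rangle$ to a matrix computation in $\Sp(2g,\Z_k)$, invoke the normal form of Theorem~\ref{thm:symp_Gamma1_crit} for $\Psi_k(f)$, and then clear the lower-right symplectic block and the mixing entries using $\Psi_k(T_{a_j}),\Psi_k(T_{b_j}),\Psi_k(T_{c_j})$ before absorbing the residual $\Gamma_1(k)$-block into $\hat{i}(\tilde{\S}_k)$. The one step you flag as an anticipated obstacle---realizing the transvections needed to kill the entries $e_{1j}$ and $e_{i2}$---is exactly what the paper settles by an explicit computation for $g=2$, multiplying by matrices built from powers of $\Psi_k(T_{c_1})$ and $\Psi_k(T_{a_2})$ and their conjugates by $\Psi_k(T_{a_2}T_{b_2}T_{a_2})$ (note only that your ``subsurface complementary to the $\{a_1,b_1\}$-handle'' heuristic is slightly off, since $c_1$ meets $b_1$).
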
 

\begin{proof}
For simplicity, we will only consider the case when $g=2$, as our arguments easily generalize for any arbitrary $g \geq 3$. We consider the canonical embedding $\eta_k : \SL(2,\Z_k) \to \Sp(4,\mathbb{Z}_k)$ defined by 
$$A \xmapsto{\eta_k}  \begin{bmatrix}
A       & O_2\\
O_2 & I_2
\end{bmatrix},$$ where $O_2$ is the $2 \times 2$ zero block and $I_2$ is the $2 \times 2$ identity block. To prove our assertion, it suffices to show that
$$\Psi_k(\Mod_{p_k}(S_2, e_1))=\langle \eta_k(\Psi_k(\Mod_{p_k}(S_1,e_1))) \cup \{ \Psi_k(T_{c_1}),\Psi_k(T_{a_2}),\Psi_k(T_{b_2})\} \rangle,$$ where $$\Psi_k(T_{c_1})=\begin{bmatrix}
	1& 1& 0 & -1\\
	0& 1& 0 & 0\\
	0& -1& 1 & 1\\
	0& 0& 0 & 1
	\end{bmatrix}, \Psi_k(T_{a_2})=\begin{bmatrix}
	1& 0& 0 & 0\\
	0& 1& 0 & 0\\
	0& 0& 1 & 1\\
	0& 0& 0 & 1
	\end{bmatrix}, \text{ and } \Psi_k(T_{b_2})=\begin{bmatrix}
	1& 0& 0 & 0\\
	0& 1& 0 & 0\\
	0& 0& 1 & 0\\
	0& 0& -1 & 1
	\end{bmatrix}.$$
	
\noindent From Theorem~\ref{thm:symp_Gamma1_crit}, we know that an arbitrary matrix $A \in \Psi_k(\Mod_{p_k}(S_2,e_1))$ has the form
$$A=\begin{bmatrix}
	1& e_{12}& e_{13} & e_{14}\\
	0& 1 & 0 & 0\\
	0& e_{32} & e_{33} & e_{34}\\
	0& e_{42} & e_{43} & e_{44}
	\end{bmatrix},$$ where $e_{33}e_{44}-e_{34}e_{43} \equiv 1\pmod{k}$. We now consider the matrices
	$$M_2 = \begin{bmatrix}
	1 & -e_{12} & 0 & 0 \\
	0 & 1 & 0 & 0 \\
	0& 0 & 1 & 0\\
	0 & 0 & 0 & 1
	\end{bmatrix} \text{ and } M_1 = \begin{bmatrix}
	1 & 0 & 0 & 0 \\
	0 & 1  & 0 & 0 \\
	0 & 0 & e_{44} & -e_{34}\\
	0 & 0 & -e_{43} & e_{33}\\
	\end{bmatrix}$$
	in $\Sp(4,\Z_k)$. 
A direct computation reveals that 
	$$A\cdot M_1 \cdot M_2 = \begin{bmatrix}
	1& 0& \alpha & \beta\\
	0& 1 & 0 & 0\\
	0& \beta & 1 & 0\\
	0& -\alpha & 0 & 1
	\end{bmatrix},$$
	where $\alpha= e_{13}e_{44}-e_{14}e_{43}$ and $\beta=e_{14}e_{33}-e_{13}e_{34}$. Furthermore, by considering $$M_3=\Psi_k(T_{c_1}^\beta) \cdot \Psi_k(T_{a_2}^{-\beta})=\begin{bmatrix}
	1& \beta & 0 & -\beta\\
	0& 1 & 0 & 0\\
	0& -\beta & 1 & 0\\
	0& 0 & 0 & 1
	\end{bmatrix},$$ we see that 
	$$A\cdot M_1 \cdot M_2 \cdot M_3=\begin{bmatrix}
	1& \beta-\alpha\beta& \alpha & 0\\
	0& 1 & 0 & 0\\
	0& 0 & 1 & 0\\
	0& -\alpha & 0 & 1
	\end{bmatrix}.$$
Finally, taking
\begin{eqnarray*}
M_4 & = & (\Psi_k(T_{a_2})\cdot \Psi_k(T_{b_2})\cdot \Psi_k(T_{a_2}))^{-1} \cdot\Psi_k(T_{c_1}^{-\alpha})\cdot\Psi_k(T_{a_2}^\alpha) \cdot (\Psi_k(T_{a_2})\cdot \Psi_k(T_{b_2}) \cdot \Psi_k(T_{a_2})) \\ 
  &= &\begin{bmatrix}
	1& -\alpha& -\alpha & 0\\
	0& 1 & 0 & 0\\
	0& 0 & 1 & 0\\
	0& \alpha & 0 & 1
	\end{bmatrix},
	\end{eqnarray*}
	we get
	$$A\cdot M_1 \cdot M_2 \cdot M_3 \cdot M_4=\begin{bmatrix}
	1& \beta-\alpha-\alpha\beta& 0 & 0\\
	0& 1 & 0 & 0\\
	0& 0 & 1 & 0\\
	0& 0 & 0 & 1
	\end{bmatrix} \in \eta_k(\Psi_k(\Mod_{p_k}(S_1,e_1))),$$ thereby proving our assertion.
\end{proof}

\noindent Theorem~\ref{thm:lmod_gen_set} together with Corollary~\ref{cor:quot_lmod} yields the following.

\begin{corollary}
\label{cor:lmod_gen_set2}
For $g,k \geq 2$, 
$$\LMod_{p_k}(S_g) = \langle \S_k'' \cup \hat{i}(\tilde{\S}_k) \cup \S_{g,k} \cup \{T_{a_2},\ldots,T_{a_g},T_{b_2}, \ldots,T_{b_g},T_{c_1},\ldots,T_{c_{g-1}} \} \rangle.$$
\end{corollary}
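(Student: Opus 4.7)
The plan is to leverage the combination of Theorem~\ref{thm:lmod_gen_set}, which provides a finite generating set for the normal subgroup $\Mod_{p_k}(S_g,e_1) \lhd \LMod_{p_k}(S_g)$, with Corollary~\ref{cor:quot_lmod}, which identifies $\S_k''$ as a transversal for the quotient $\LMod_{p_k}(S_g)/\Mod_{p_k}(S_g,e_1) \cong \Z_k^\times$ (normality and the quotient structure come from Corollary~\ref{cor:index_gamma1_in_lmod}). The underlying elementary fact I would invoke is: if $N \lhd G$ and $T \subset G$ maps bijectively onto $G/N$, then $G = \langle T \cup S \rangle$ for any generating set $S$ of $N$, because every $f \in G$ can be written as $f = t \cdot n$ with $t \in T$ and $n \in N$.

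Concretely, given any $f \in \LMod_{p_k}(S_g)$, Corollary~\ref{cor:quot_lmod} produces a unique $\ell \in \Z_k^\times$ so that $\varphi_\ell^{-1} f \in \Mod_{p_k}(S_g,e_1)$, where $\varphi_\ell = T_{b_1}^{\bar\ell-1} T_{a_1}^{-1} T_{b_1}^{\ell-1} \in \S_k''$. Applying Theorem~\ref{thm:lmod_gen_set} to the element $\varphi_\ell^{-1} f$ then expresses it as a word in $\hat{i}(\tilde{\S}_k) \cup \S_{g,k} \cup \{T_{a_2},\ldots,T_{a_g},T_{b_2},\ldots,T_{b_g},T_{c_1},\ldots,T_{c_{g-1}}\}$. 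Consequently,
\[
f \;=\; \varphi_\ell \cdot (\varphi_\ell^{-1} f)
\]
is a word in $\S_k'' \cup \hat{i}(\tilde{\S}_k) \cup \S_{g,k} \cup \{T_{a_2},\ldots,T_{a_g},T_{b_2},\ldots,T_{b_g},T_{c_1},\ldots,T_{c_{g-1}}\}$, yielding the desired containment in one direction. The reverse inclusion is automatic since every listed generator lies in $\LMod_{p_k}(S_g)$: the twists $T_{a_j}, T_{b_j}, T_{c_j}$ (for $j \geq 2$) fix $e_1 \in H_1(S_g,\Z_k)$ by inspection of their symplectic action, the set $\S_{g,k}$ generates $\Mod(S_g)[k] \subset \LMod_{p_k}(S_g)$ by Corollary~\ref{cor:torelli_lift}, $\hat{i}(\tilde{\S}_k)$ lies in $\Mod_{p_k}(S_g,e_1)$ by construction, and $\S_k'' \subset \LMod_{p_k}(S_g)$ by Corollary~\ref{cor:quot_lmod}.

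Since both of the main ingredients have already been established, this argument is essentially a routine assembly rather than a new computation, and I do not anticipate any serious obstacle. The only point that merits explicit mention in the writeup is the verification that each element of $\S_k''$ genuinely lies in $\LMod_{p_k}(S_g)$ (rather than just in some abstract quotient)—this is immediate from Corollary~\ref{cor:quot_lmod}, so the entire proof can be delivered in a few lines.
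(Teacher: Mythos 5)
Your proposal is correct and matches the paper's approach: the paper derives this corollary in one line by combining Theorem~\ref{thm:lmod_gen_set} with Corollary~\ref{cor:quot_lmod}, exactly the transversal-plus-normal-subgroup decomposition you spell out. Your writeup simply makes explicit the routine group-theoretic step that the paper leaves implicit.
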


\noindent It is not hard to see that 
$\cap_{k \geq 2} \, \Mod_{p_k}(S_g,e_1) = \Stab_{\Mod(S_g)}(e_1)$ is a index $2$ subgroup of $\UMod(S_g)$, where the $e_1$ appearing in $\Stab_{\Mod(S_g)}(e_1)$ refers to $e_1 \in H_1(S_g, \Z)$. By applying Corollary~\ref{cor:lmod_gen_set2} and using arguments similar to the ones used in the proof of Theorem~\ref{thm:lmod_gen_set}, we obtain the following. 

\begin{corollary}
\label{cor:Umod_gens}
For $g \geq 1$ and $k \geq 2$, 
$$\UMod(S_g) = \langle \S(g) \cup \{T_{a_1},T_{a_2},\ldots,T_{a_g},T_{b_2}, \ldots,T_{b_g},T_{c_1},\ldots,T_{c_{g-1}}, \iota \} \rangle,$$ where $\I(S_g) = \langle \S(g) \rangle$.
\end{corollary}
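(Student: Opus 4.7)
My plan is to reduce the statement to finding generators for the integral stabilizer $\Stab_{\Mod(S_g)}(e_1)$ with $e_1 \in H_1(S_g,\Z)$, and then to replay the matrix-reduction argument of Theorem~\ref{thm:lmod_gen_set} verbatim with $\Z$-coefficients in place of $\Z_k$-coefficients.

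First I would note that $\bigcap_{k \geq 2} \Mod_{p_k}(S_g, e_1) = \Stab_{\Mod(S_g)}(e_1)$, since a class in $H_1(S_g,\Z)$ vanishes iff all of its mod-$k$ reductions do. Next, for any $f \in \UMod(S_g)$, writing $\Psi(f) = (d_{ij})$, the characterization of $\UMod(S_g)$ already derived forces $d_{2i}=0$ for $i \neq 2$ and $d_{22}=\epsilon \in \{\pm 1\}$. Applying the symplectic identity $\Psi(f)\,J\,\Psi(f)^T = J$ and comparing the second rows of both sides yields $d_{i1} = \epsilon\,\delta_{i1}$, so that the first column of $\Psi(f)$ equals $\epsilon e_1$. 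This exhibits $f \mapsto \epsilon(f)$ as a well-defined homomorphism $\UMod(S_g) \to \{\pm 1\}$ whose kernel is precisely $\Stab_{\Mod(S_g)}(e_1)$; surjectivity is witnessed by $\Psi(\iota) = -I_{2g}$. Consequently $\UMod(S_g) = \langle \Stab_{\Mod(S_g)}(e_1), \iota \rangle$, and it suffices to generate $\Stab_{\Mod(S_g)}(e_1)$.

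For this, I would replay the argument of Theorem~\ref{thm:lmod_gen_set} with $\Psi$ in place of $\Psi_k$. The auxiliary matrices $M_1,\ldots,M_4$ constructed there already have integer entries, and the same column operations reduce any $A \in \Psi(\Stab_{\Mod(S_g)}(e_1))$ (illustrated for $g = 2$, the higher-genus case being analogous) to an element of $\eta(\Psi(\Stab_{\Mod(S_1)}(e_1)))$. The integral identity $\Psi(\Stab_{\Mod(S_1)}(e_1)) = \langle \Psi(T_{a_1}) \rangle$ is immediate from the shape of matrices in $\SL(2,\Z)$ that fix $e_1$. Lifting through $\Psi$ and absorbing $\ker(\Psi) = \I(S_g) = \langle \S(g)\rangle$ then yields
$$\Stab_{\Mod(S_g)}(e_1) = \langle \S(g) \cup \{T_{a_1},T_{a_2},\ldots,T_{a_g},T_{b_2},\ldots,T_{b_g},T_{c_1},\ldots,T_{c_{g-1}}\}\rangle,$$
and combining with the previous paragraph yields the corollary.

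The main obstacle is confirming that the column-reduction of Theorem~\ref{thm:lmod_gen_set} transfers faithfully from $\Sp(2g,\Z_k)$ to $\Sp(2g,\Z)$; the key observation is that the computation never invokes any divisibility-by-$k$ cancellations, since the matrices $M_1,\ldots,M_4$ are built directly from the entries of $A$ and every step is purely integral. The only genuinely new input beyond Theorem~\ref{thm:lmod_gen_set} is the $\Z$-analog $\Psi(\Stab_{\Mod(S_1)}(e_1)) = \langle \Psi(T_{a_1})\rangle$, which plays the role of $\Psi_k(\Mod_{p_k}(S_1,e_1)) = \Gamma_1(k)$ in the original argument.
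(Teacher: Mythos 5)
Your proposal is correct and follows essentially the same route the paper sketches: reduce to the integral stabilizer $\Stab_{\Mod(S_g)}(e_1) = \bigcap_{k\geq 2}\Mod_{p_k}(S_g,e_1)$ (which the paper likewise notes has index $2$ in $\UMod(S_g)$), rerun the matrix reduction of Theorem~\ref{thm:lmod_gen_set} over $\Z$ instead of $\Z_k$, and adjoin $\iota$. Your added verifications --- that the symplectic relation forces the first column of $\Psi(f)$ to be $\pm e_1$, and that the matrices $M_1,\dots,M_4$ remain integral and symplectic so no mod-$k$ cancellation is ever used --- simply fill in the details the paper leaves as ``arguments similar to'' Theorem~\ref{thm:lmod_gen_set}.
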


\noindent In particular, Corollary~\ref{cor:Umod_gens} implies that $\UMod(S_g)$ is finitely generated for $g \geq 3$, as $\I(S_g)$ is known~\cite{DJ} to be finitely generated for $g \geq 3$. An interesting question that arises in this context is whether $\UMod(S_2)$ is finitely generated. We plan to investigate this in future works.

\subsection{Finite order and reducible maps in $\Mod_{p_k}(S_g,e_1)$} 
By the Nielsen realization theorem~\cite{SK,JN}, we know that an $f \in \Mod(S_g)$ of finite order is represented by a $F \in \Homeo^+(S_g)$ of the same order. Hence, we can associate a \textit{corresponding orbifold} $\O_f := S_g/\langle F \rangle$ to $f$, whose genus we denote by $g(\O_f)$. 

\begin{proposition}
\label{prop:fin_ord_orb_gen_pos}
Consider an $f \in \Mod(S_g)$ of finite order such that $g(\O_f) > 0$. Then $f$ has a conjugate in $ \Mod_{p_k}(S_g,e_1)$. 
\end{proposition}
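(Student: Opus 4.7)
The plan is to find an $h \in \Mod(S_g)$ such that $hfh^{-1}$ stabilizes $e_1 \in H_1(S_g, \Z_k)$. Since
$$\Psi_k(hfh^{-1})(e_1) = \Psi_k(h)\,\Psi_k(f)\,\Psi_k(h)^{-1}(e_1),$$
this amounts to arranging that $\Psi_k(f)$ fixes the primitive vector $v := \Psi_k(h)^{-1}(e_1)$. The proof therefore splits into two subtasks: (a) exhibit a primitive $v \in H_1(S_g, \Z_k)$ with $\Psi_k(f)(v) = v$, and (b) invoke the transitivity of $\Mod(S_g)$ on primitive vectors in $H_1(S_g, \Z_k)$ (noted in Section~\ref{sec:intro}) to select $h$ with $\Psi_k(h)(v) = e_1$.

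For subtask (a), I would use the geometry of the quotient orbifold. Let $F \in \Homeo^+(S_g)$ be a Nielsen representative of $f$, and let $\pi \colon S_g \to \O_f$ be the quotient map. Since $g(\O_f) \geq 1$, the underlying surface of $\O_f$ admits a nonseparating simple closed curve $\bar\alpha$, which may be taken disjoint from the cone points. Its preimage $\pi^{-1}(\bar\alpha)$ is a disjoint collection of simple closed curves $\alpha_1, \ldots, \alpha_m$ in $S_g$, each covering $\bar\alpha$ with a common degree $d$ satisfying $md = |F|$. The cyclic group $\langle F \rangle$ permutes these components, and the stabilizer of $\alpha_1$ acts on $\alpha_1$ as the deck group of a connected cyclic cover of $S^1$, i.e.\ by rotations, which preserve orientation. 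One can therefore coherently orient the $\alpha_i$'s so that $F_\ast$ permutes the resulting oriented components. The class $w := [\alpha_1] + \cdots + [\alpha_m] \in H_1(S_g, \Z)$ is then $\Psi(f)$-invariant, and nonzero since $\pi_\ast(w) = d \cdot [\bar\alpha] \neq 0$ in $H_1(\O_f, \Z)$. Dividing $w$ by the gcd of its coordinates produces a primitive $\Psi(f)$-invariant vector $v_0 \in H_1(S_g, \Z)$, and its reduction $v$ modulo $k$ is a primitive vector in $H_1(S_g, \Z_k)$ fixed by $\Psi_k(f)$.

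Subtask (b) is then immediate: by transitivity, choose $h \in \Mod(S_g)$ with $\Psi_k(h)(v) = e_1$. One computes
$$\Psi_k(hfh^{-1})(e_1) = \Psi_k(h)\,\Psi_k(f)(v) = \Psi_k(h)(v) = e_1,$$
so $hfh^{-1} \in \Stab_{\Mod(S_g)}(e_1) = \Mod_{p_k}(S_g, e_1)$, as desired.

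The delicate point is subtask (a) --- specifically, the coherent choice of orientations on the $\alpha_i$'s and the verification that the resulting class is nonzero. If the geometric bookkeeping proves unwieldy, a cleaner substitute is the transfer map for the branched cover $\pi$: a transfer $\tau \colon H_1(\O_f, \mathbb{Q}) \to H_1(S_g, \mathbb{Q})$ satisfies $\pi_\ast \tau = |F| \cdot \mathrm{id}$ and lands inside $H_1(S_g, \mathbb{Q})^F$, which forces $\dim_{\mathbb{Q}} H_1(S_g, \mathbb{Q})^F \geq 2 g(\O_f) \geq 2$. A nonzero integral $F$-fixed vector is then obtained by clearing denominators, and the same scaling-and-reduction-modulo-$k$ argument finishes (a).
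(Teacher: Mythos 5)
Your proof is correct and follows essentially the same route as the paper's: take a nonseparating simple closed curve in the quotient orbifold, sum the homology classes of the components of its preimage to obtain an $f$-invariant primitive vector, and then conjugate using the transitivity of $\Mod(S_g)$ on primitive vectors in $H_1(S_g,\Z_k)$. You are in fact somewhat more careful than the paper, which simply asserts that the sum of the preimage classes is primitive; your coherent-orientation argument, the nonvanishing check via pushforward, and the normalization by the gcd (or the transfer-map alternative) supply details the paper leaves implicit.
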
 

\begin{proof}
Consider an $f \in \Mod(S_g)$ of order $n$ such that $g(\O_f) > 0$. Then there exists a nonseparating curve $c \in \O_f$, whose preimage under the branched covering $S_g \to \O_f$ is a reduction system $\C = \{c_1,\ldots,c_n\}$ for $f$ comprising nonseparating curves that are cyclically permuted by $f$. Thus, $f$ stabilizes the primitive homology vector $v = c_1+\ldots + c_k \in H_1(S_g,\Z_k)$. Since $\Mod(S_g)$ acts transitively on the primitive vectors of $H_1(S_g,\Z_k)$, there exists a $\psi \in \Mod(S_g)$ such that 
$\psi(v) = e_1$. Thus, it follows that $f$ has a conjugate $ \psi f \psi^{-1} \in \Mod_{p_k}(S_g,e_1)$, as desired.
\end{proof}

\noindent A variant of this result was proved in~\cite{DR}, where it was also shown that any map as described in Proposition~\ref{prop:fin_ord_orb_gen_pos} lifts to a map in $\Mod(S_{g_k})$ that commutes with the generator of the deck transformation group of our cover $p_k : S_{g_k} \to S_g$. 

It is well known~\cite{JG3} that a finite order mapping class is irreducible if and only if $\O_f$ is a sphere with three cone points.

\begin{proposition}
For $g,k \geq 2$, let $f \in \Mod(S_g)$ be an irreducible mapping class of finite order. Then $f \notin \Mod_{p_k}(S_g,e_1)$.
\end{proposition}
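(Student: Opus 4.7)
The plan is to argue by contradiction. Suppose $f$ is irreducible of finite order $n$ and $f \in \Mod_{p_k}(S_g,e_1)$. First I would apply the Nielsen realization theorem to represent $f$ by a finite-order $F \in \Homeo^+(S_g)$. Since $f$ is irreducible, the cited classification gives $\O_f \cong S^2(n_1,n_2,n_3)$, so $g(\O_f)=0$; by a transfer argument this yields $H_1(S_g,\Q)^{F_*}=0$.

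Next, I would extract a divisibility constraint on $n$. Because $H_1(S_g,\Z)$ is torsion-free and has trivial $F_*$-invariants rationally, the norm operator $N := \sum_{j=0}^{n-1} F_*^{\,j}$ must vanish identically on $H_1(S_g,\Z)$: its image lies in the $F_*$-invariant subgroup, which, being a subgroup of a free abelian group that dies after tensoring with $\Q$, is itself zero. By Theorem~\ref{thm:symp_Gamma1_crit}, $F_*(e_1) \equiv e_1 \pmod k$, so reducing $N(e_1)=0$ modulo $k$ produces $n\,e_1 \equiv 0 \pmod k$; primitivity of $e_1$ then forces $k \mid n$.

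After that, I would exploit the fine block structure of $\Psi_k(F)$ from Theorem~\ref{thm:symp_Gamma1_crit}: first column $e_1$, second row $e_2^T$, with upper-left $2 \times 2$ unipotent block carrying a single parameter $c \in \Z_k$, and a $(2g-2) \times (2g-2)$ symplectic subblock acting on the complement. Expanding $\Psi_k(F)^n = I$ in this block form and incorporating the divisibility $k \mid n$ should yield a further system of relations tying $c$, the off-diagonal strips, and the subblock together.

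Finally, I would convert these algebraic constraints into a geometric contradiction. Consider the $F$-orbit $\{F^j(a_1)\}_{j=0}^{n-1}$ of a simple closed curve $a_1$ representing $e_1$. The congruence $F_*(e_1) \equiv e_1 \pmod k$ places all orbit representatives in a common coset of $k H_1(S_g,\Z)$. Combining this with $k \mid n$ and the Riemann--Hurwitz data of the branched cover $S_g \to S^2(n_1,n_2,n_3)$, the plan is to assemble from this orbit an $F$-invariant essential multicurve on $S_g$, contradicting irreducibility. The first two steps are formal and yield only necessary algebraic conditions; the hard part will be this final geometric reduction, since promoting the mod-$k$ homological data to an isotopy-level invariant system demands a careful study of the isotropy decomposition of the cyclic action on the orbifold $\O_f = S^2(n_1,n_2,n_3)$.
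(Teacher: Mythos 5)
Your first two steps are correct, and they are in fact more careful than the paper's own argument. The paper's proof fixes an integral representative $w$ of $e_1$, forms the $\langle f\rangle$-orbit sums $V_{\mathbb{O}}=\sum_i f^i(w)$, claims that infinitely many of these invariant vectors are distinct, and concludes $f\in\I(S_g)$. Your transfer argument shows that for an irreducible $f$ one has $H_1(S_g,\mathbb{Q})^{f_*}=0$, so the norm operator $N=\sum_{j=0}^{n-1}f_*^{\,j}$ vanishes on $H_1(S_g,\Z)$ and hence \emph{every} $V_{\mathbb{O}}$ is zero; this exposes the gap in the published proof, which moreover never invokes irreducibility and, if valid, would contradict Proposition~\ref{prop:fin_ord_orb_gen_pos}. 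Your reduction of $N(e_1)=0$ modulo $k$ to obtain $k\mid n$ is also correct and is a genuine necessary condition.

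The problem is that your proof stops there: steps three and four are only a programme, no relation beyond $k\mid n$ is actually extracted from $\Psi_k(f)^n=I$, and no invariant multicurve is constructed. This gap is not repairable, because the statement fails once $k\mid n$ can be realized. Concretely, let $g=2$ and let $f$ be the order-$5$ periodic map with $\O_f$ the sphere with three cone points of order $5$; it is irreducible, and since $g(\O_f)=0$ its action on $H_1(S_2,\Z)\cong\Z^4$ has characteristic polynomial $x^4+x^3+x^2+x+1$. Its value at $x=1$ is $5$, so $f_*-I$ is singular modulo $5$ and $f_*$ fixes a nonzero, hence primitive, vector $v\in H_1(S_2,\Z_5)$ (alternatively: an order-$5$ permutation of the $5^4-1\equiv 4\pmod 5$ primitive vectors of $\Z_5^4$ must have a fixed point). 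Transitivity of the $\Mod(S_2)$-action on primitive vectors then conjugates $f$ into $\Mod_{p_5}(S_2,e_1)$, giving an irreducible finite-order element there. So the invariant multicurve you hope to assemble in step four cannot exist in general; the most your method yields is the correct contrapositive statement that $f\notin\Mod_{p_k}(S_g,e_1)$ whenever $k\nmid |f|$.
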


\begin{proof}
Let us assume on the contrary that $f \in \Mod_{p_k}(S_g,e_1)$. We consider an arbitrary representative $w \in H_1(S_g,\Z)$ of $e_1 \in H_1(S_g,\Z_k)$ and its $\langle f \rangle $-orbit $\mathbb{O} = \{w,f(w),\ldots,f^{k(w)-1}(w)\}$, where $k(w) \mid |f|$. Then $f$ preserves the vector $V_{\mathbb{O}}= \sum_{i=0}^{k(w)-1} f^i(w) \in H_1(S_g,\Z)$. As the $\langle f \rangle$-action on set of representative vectors in $e_1$ partitions it into infinitely many disjoint orbits, it must preserve infinitely many distinct vectors of the form $V_{\mathbb{O}}$. Thus, we conclude that $f \in \I(S_g)$. This contradicts our assumption, as $\I(S_g)$ is torsion-free for $g \geq 1$ (see~\cite[Theorem 6.12]{FM}).
\end{proof}

\begin{remark}\label{rem:dehn_roots}
Let $c$ be a simple closed curve in $S_g$, for $g \geq 2$. It is well known~\cite{MK1,KR1} that a root $f$ of $T_c$ of degree $n$ preserves $c$ and induces an order $n$ map $\hat{f}$ on the $\widehat{S}_g(c)$ obtained by capping off the boundary components of $\overline{S_g \setminus c}$. When $c$ is a nonseparating curve, $\widehat{S}_g(c) \approx S_{g-1}$, and $\hat{f} \in \Mod(S_{g-1})$. However, when $c$ is separating, $\widehat{S}_g(c) \approx S_{g_1} \sqcup S_{g_2}$, where $S_g = S_{g_1} \#_c S_{g_2}$, $\hat{f}_i := \hat{f}\vert{S_{g_i}} \in \Mod(S_{g_i})$ for $i = 1,2$, and $\text{lcm}(|\hat{f}_1|,|\hat{f}_2|)=n$. 
\end{remark}

\noindent We now have the following assertion. 
\begin{proposition}
\label{prop:root_lift}
Let $f \in \Mod(S_g)$ be a root of a $T_c$ of degree $n$. 
\begin{enumerate}[(i)]
\item If $c$ is nonseparating, then $f$ has a conjugate in $\Mod_{p_k}(S_g,e_1)$. 
\item If $c$ is separating and $g(\O_{\hat{f}_i})>0$ for at least one $i \in \{1,2\}$, then $f$ has a conjugate in $\Mod_{p_k}(S_g,e_1)$.
\end{enumerate}
\end{proposition}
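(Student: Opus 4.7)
The strategy for both parts is the one used in Proposition~\ref{prop:fin_ord_orb_gen_pos}: exhibit a primitive vector $v \in H_1(S_g,\Z_k)$ that is fixed by $f_{\#}$, then invoke the transitivity of $\Mod(S_g)$ on primitive vectors in $H_1(S_g,\Z_k)$ to choose $\psi \in \Mod(S_g)$ with $\psi_{\#}(v) = e_1$; the conjugate $\psi f \psi^{-1}$ will then lie in $\Mod_{p_k}(S_g, e_1)$.

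For (i), the natural candidate is $v = [c] \in H_1(S_g,\Z_k)$, which is primitive because $c$ is nonseparating (its class is primitive in $H_1(S_g,\Z)$, and reduction mod $k$ preserves primitivity). By Remark~\ref{rem:dehn_roots}, $f$ preserves $c$ setwise, so $f_{\#}([c]) = \pm[c]$. The key claim is that the sign is $+1$, so that $f_{\#}([c]) = [c]$. I would establish this by a local analysis on an annular neighborhood of $c$: the relation $fT_cf^{-1} = f^n = T_c$, together with the fact that $T_c$ acts as a nontrivial shear on this annulus with a prescribed sense, forces $f$ to act by a (fractional) rotation rather than by a map swapping the two sides of $c$. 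Once $f_{\#}([c]) = [c]$ is in hand, the conjugation step is immediate.

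For (ii), assume without loss of generality that $g(\O_{\hat f_1}) > 0$. Apply the argument of Proposition~\ref{prop:fin_ord_orb_gen_pos} to the finite-order map $\hat f_1 \in \Mod(S_{g_1})$: choose a nonseparating simple closed curve $c'$ in $\O_{\hat f_1}$, and let $\C' = \{c'_1,\ldots,c'_m\}$ be its preimage in $S_{g_1}$ under the branched cover $S_{g_1} \to \O_{\hat f_1}$, with $\hat f_1$ cyclically permuting $\C'$. This yields a primitive, $\hat f_1$-fixed class $v' = \sum_{j=1}^{m}[c'_j] \in H_1(S_{g_1},\Z_k)$. Since $c$ is separating, we may identify $S_{g_1}$ (before capping) with the corresponding component of $\overline{S_g\setminus c}$ inside $S_g$, so that $\C'$ is a disjoint union of simple closed curves in $S_g$ lying entirely on the $S_{g_1}$-side of $c$. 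The map $f$ cyclically permutes $\C'$ exactly as $\hat f_1$ does. Finally, the decomposition $S_g = S_{g_1}\#_c S_{g_2}$ provides a symplectic basis of $H_1(S_g,\Z)$ extending one of $H_1(S_{g_1},\Z)$, so the inclusion $H_1(S_{g_1},\Z)\hookrightarrow H_1(S_g,\Z)$ sends primitives to primitives; hence the image of $v'$ is a primitive class in $H_1(S_g,\Z_k)$ fixed by $f_{\#}$, and transitivity completes the proof.

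The main obstacle is the orientation-preservation claim in (i): if $f_{\#}([c]) = -[c]$ were possible, then $[c]$ would fail to be $f$-fixed for $k\geq 3$, and one would need a different primitive fixed vector. I expect this point to be handled by the local analysis sketched above, or equivalently by invoking the data-set classification of roots of Dehn twists. In (ii), the main bookkeeping concern is to verify carefully that the identification of $S_{g_1}$ as a subsurface of $S_g$ is compatible both with the $f$-action on $\C'$ and with the inclusion on first homology, so that primitivity and invariance both transfer from $S_{g_1}$ to $S_g$.
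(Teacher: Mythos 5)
Your proposal follows essentially the same route as the paper: part (i) is handled by moving $c$ to $a_1$ (equivalently $[c]$ to $e_1$) via transitivity and observing that the conjugated root stabilizes $e_1$, and part (ii) is reduced to Proposition~\ref{prop:fin_ord_orb_gen_pos} applied to the finite-order map $\hat{f}_i$ on the side of $c$ with positive orbifold genus; your extra care about primitivity being preserved under the inclusion $H_1(S_{g_1},\Z)\hookrightarrow H_1(S_g,\Z)$ is correct and harmless.

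The one step in your write-up that does not work as stated is the sign claim in (i). The relation $fT_cf^{-1}=T_c$ holds for \emph{any} mapping class that fixes the isotopy class of the unoriented curve $c$, because a Dehn twist depends only on the unoriented curve and the orientation of the ambient surface; for instance the hyperelliptic involution satisfies $\iota T_c \iota^{-1}=T_c$ while acting by $-1$ on $[c]$. Hence no local analysis of the shear on an annular neighborhood can extract the sign of $f_{\#}([c])$ from that relation alone. The correct input is your fallback: by the classification of roots of Dehn twists cited in Remark~\ref{rem:dehn_roots}, the degree $n$ of a root of $T_c$ for $c$ nonseparating is necessarily odd; if $f$ reversed the orientation of $c$, then $f^n=T_c$ would reverse it as well, a contradiction, so $f_{\#}([c])=[c]$ and the conjugation step goes through. (The paper's own proof is silent on this point --- it passes from ``$\psi f\psi^{-1}$ preserves the curve $a_1$'' to ``$\psi f\psi^{-1}$ fixes $e_1\in H_1(S_g,\Z_k)$'' without comment --- so you are right to flag the issue, but the justification must come from the oddness of $n$, not from the conjugation relation.)
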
 
\begin{proof} 
By Remark~\ref{rem:dehn_roots}, we have that $f(c) = c$, and so $f \in \text{Stab}_{\Mod(S_g)}(c)$, where we view $c$ as a primitive vector in $H_1(S_g,\Z_k)$. Now consider a $\psi \in \Mod(S_g)$ such that $\psi(c) = a_1$. Then a fundamental property of Dehn twists would imply that
$$\psi T_c \psi^{-1} = T_{\psi(c)} = T_{a_1}.$$
Since $f^n = T_c$, we have 
$$(\psi f \psi^{-1})^n = \psi f^n \psi^{-1} = \psi T_c \psi^{-1} = T_{a_1}.$$ Thus, $\psi f \psi^{-1}$ is a root of $T_{a_1}$, and so we have $(\psi f \psi^{-1})(a_1) = a_1$, from which (i) follows.

The assertion in (ii) follows directly from Remark~\ref{rem:dehn_roots} and  Proposition~\ref{prop:fin_ord_orb_gen_pos}.
\end{proof}

\subsection{A class of liftable Penner-type pseudo-Anosovs}  Let $\lambda(f)$ denote the stretch factor of the pseudo-Anosov mapping class $f$. Generalizing a result of Thurston, Penner~\cite{RP} gave the following recipe for constructing pseudo-Anosovs in $\Mod(S_g)$.
\begin{theorem}
\label{thm:pA_recipe}
Let $\C = \{\alpha_1,\ldots,\alpha_n\}$ and $\D = \{\alpha_{n+1},\ldots,\alpha_{n+m}\}$ be multicurves in $S_g$ that together fill $S_g$.  
\begin{enumerate}[(i)]
\item Then any product of positive powers of the $T_{\alpha_i}$, for $i=1,\ldots,n$ and negative powers of the $T_{\alpha_{n+j}}$, for $j=1,\ldots,m$, where each $\alpha_i$ and each $\alpha_{n+j}$ appears at least once, is pseudo-Anosov.
\item To each $T_{\alpha_{k}}$, associate a matrix $B_k=I+A_k \Sigma$, where $I$ is the identity matrix of order $n+m$, $A_k$ is the $(n+m) \times (n+m)$ matrix all of whose entries are zero, except for the $(kk)^{th}$ entry, which is $1$, and $\Sigma$ is the incidence matrix of the imbedded graph $\C \cup \D$. For a pseudo-Anosov $f=T_{\alpha_{i_1}}\ldots T_{\alpha_{i_s}}$ as in (i), $\lambda(f)$ is the largest eigenvalue of the Perron matrix 
\[
M_f := B_{i_1}\ldots B_{i_s}.
\]
\end{enumerate}
\end{theorem}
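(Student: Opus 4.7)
The plan is to produce a Penner-style invariant bigon track whose weight space is naturally identified with $\mathbb{R}^{n+m}$, verify that the prescribed twists act on this weight space exactly by the matrices $B_k$, and then invoke Thurston's criterion for pseudo-Anosov maps carried by train tracks, so that (i) and (ii) fall out together.

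First, I would build the bigon track $\tau$ associated to the filling collection $\C \cup \D$. At each intersection of an $\alpha_i \in \C$ with an $\alpha_{n+j} \in \D$, smooth the crossing so that $\alpha_i$ continues as a single smooth branch of $\tau$ and so does $\alpha_{n+j}$. Because $\C \cup \D$ fills $S_g$, the complementary regions of $\tau$ are disks bounded by smoothed polygons, so $\tau$ is a bona fide bigon track, and both $\C$ and $\D$ are carried by it. The cone of transverse measures on $\tau$ then identifies with $\mathbb{R}_{\ge 0}^{n+m}$, with one coordinate per curve $\alpha_k$; recurrence and transverse recurrence of $\tau$ follow from the strictly positive weight systems realized by $\C$ and $\D$ respectively.

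Next, I would check that each generator $T_{\alpha_i}$ for $i \le n$ and each inverse $T_{\alpha_{n+j}}^{-1}$ for $1 \le j \le m$ sends $\tau$ to a train track carried by $\tau$ itself. This is precisely where the sign convention in the statement is used: the chosen handedness ensures the image of each smoothed crossing lies on the correct side. A local computation at one smoothed crossing then shows that, under the twist about $\alpha_k$, the weight $w_\ell$ on a branch of $\alpha_\ell$ is updated to $w_\ell + \Sigma_{\ell k}\, w_k$ while $w_k$ itself is preserved; in matrix form, the induced map on weights is $w \mapsto B_k w$ with $B_k = I + A_k \Sigma$, which gives the local version of (ii). A short combinatorial argument, using that $\C \cup \D$ fills (so $\Sigma$ is irreducible) and that every index appears at least once in the word, then shows $M_f = B_{i_1}\cdots B_{i_s}$ admits a strictly positive power and is therefore a Perron--Frobenius matrix.

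Finally, I would apply Thurston's pseudo-Anosov recognition criterion: a mapping class preserving a recurrent, transversely recurrent train track, acting on its weight space by a Perron--Frobenius matrix, and whose resulting invariant measured lamination fills $S_g$, is pseudo-Anosov with dilatation equal to the Perron eigenvalue of that matrix. The filling conclusion for the lamination defined by the Perron eigenvector $v$ follows from the filling property of $\C \cup \D$ combined with the strict positivity of $v$; this delivers both the pseudo-Anosov conclusion of (i) and the formula for $\lambda(f)$ in (ii). The main technical obstacle I anticipate is step two: globally verifying, with the correct convention for left-handed Dehn twists, that $T_{\alpha_k}^{\epsilon_k}(\tau)$ is carried by $\tau$ with transition matrix \emph{exactly} $B_k$. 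Once that is nailed down, what remains is Perron--Frobenius linear algebra on $\mathbb{R}^{n+m}$ together with an appeal to Thurston's theorem.
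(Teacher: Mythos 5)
This theorem is not proved in the paper at all: it is quoted as Penner's construction and attributed to \cite{RP}, so there is no in-paper argument to compare against. Your proposal is essentially a reconstruction of Penner's original proof (smooth the crossings of the filling pair to get an invariant bigon track, show the prescribed twists carry the track to itself, read off the transition matrices, and finish with Perron--Frobenius theory and the train-track criterion), so you are on the canonical route.

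Two points in your sketch are stated imprecisely enough that they would need repair in a full write-up. First, the cone of transverse measures on $\tau$ is parametrized by weights on the \emph{branches} of $\tau$, of which there are many more than $n+m$; what is true is that the measures carried by $\alpha_1,\ldots,\alpha_{n+m}$ span an $(n+m)$-dimensional invariant subcone, and it is on this subcone that the $B_k$ act. Second, your local update rule is transposed: since $T_{\alpha_k}(\alpha_\ell)$ is carried with weight vector $e_\ell + i(\alpha_k,\alpha_\ell)\,e_k$, the twist about $\alpha_k$ changes only the coordinate $w_k$, replacing it by $w_k + \sum_\ell \Sigma_{k\ell}w_\ell$, and fixes every $w_\ell$ with $\ell \neq k$; in matrix form this is $w \mapsto (I + A_k\Sigma)w = B_k w$, whereas the rule you wrote ($w_\ell \mapsto w_\ell + \Sigma_{\ell k}w_k$ with $w_k$ fixed) is $I + \Sigma A_k = B_k^{T}$. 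Finally, the genuinely hard steps --- which you correctly flag but do not carry out --- are (a) verifying that $T_{\alpha_i}(\tau)$ for $i\le n$ and $T_{\alpha_{n+j}}^{-1}(\tau)$ for $j\le m$ are carried by $\tau$, which is exactly where the sign convention and the bigon-track formalism earn their keep, and (b) the recognition statement that a map carrying a filling, recurrent, transversely recurrent bigon track to itself with an eventually positive transition matrix is pseudo-Anosov with dilatation the Perron root; both are nontrivial and occupy most of Penner's paper. Your observation that filling forces the incidence graph of $\C\cup\D$ to be connected, hence $\Sigma$ irreducible and $M_f$ eventually positive once every letter appears, is correct. As a plan the proposal is sound; the items above are what a referee would ask you to supply.
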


\noindent  For $g \geq 1$, we consider maps in $\Mod(S_g)$ of the following form. 

\begin{definition}
\label{def:pa_family}
Given an integer $g \geq 2$, consider a tuple of positive integers of the form 
$$\T = ((p_1,\ldots,p_{g-1}), ((k_1,\ell_1),\ldots,(k_g,\ell_g)),$$ which we call
an \textit{admissible tuple $\T$ of genus g}.  Then given an admissible $\T$ of genus $g$, we define  $h_\T \in \Mod(S_g)$ by 
$$h_\T := \prod_{i=1}^{g-1} T_{c_i}^{-p_i} \prod_{j=1}^g \left(T_{a_j}^{-k_j}T_{b_j}^{\ell_j}\right).$$
\end{definition} 

\noindent In the following lemma, we derive some basic properties of the map $h_{\T}$. 

\begin{lemma}\label{mainthm}
Consider a map $h_\T$ as in Definition~\ref{def:pa_family}. Then:
\begin{enumerate}[(i)]
\item $h_\T$ is a pseudo-Anosov map, and 
\item $\Mod(S_g)$ is generated by $3g$ elements in $\{h_\T:\T \text{ is an admissible tuple of genus g}\}$.
\end{enumerate}
\end{lemma}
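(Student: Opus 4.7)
For (i), the plan is to invoke Penner's construction (Theorem~\ref{thm:pA_recipe}) directly. Take $\C := \{b_1, \ldots, b_g\}$ as the multicurve whose twists carry positive exponents in $h_\T$, and $\D := \{a_1, \ldots, a_g, c_1, \ldots, c_{g-1}\}$ as the one whose twists carry negative exponents. Three things need to be checked: each of $\C, \D$ is a multicurve, $\C \cup \D$ fills $S_g$, and each curve in $\C \cup \D$ appears at least once in $h_\T$. The third condition is immediate since every $p_i, k_j, \ell_j$ is positive. The first follows from inspection of the standard configuration in Figure~\ref{fig:Sgk_free_hom}: the $b_j$'s are pairwise disjoint, and within $\D$ the $a_j$'s are pairwise disjoint, the $c_i$'s are pairwise disjoint, and each $c_i$ is disjoint from every $a_j$. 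For the filling condition, $\C \cup \D$ is precisely the collection of curves underlying the Lickorish generating set $\mathcal{L}_g$, whose complement in $S_g$ is a disjoint union of disks. Hence Penner's recipe applies and $h_\T$ is pseudo-Anosov.

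For (ii), the plan is to exhibit $3g$ specific admissible tuples whose associated maps generate $\Mod(S_g)$. Let $\T^{(0)}$ be the admissible tuple with all entries equal to $1$, so that $h_{\T^{(0)}}$ factors as
\[
h_{\T^{(0)}} \;=\; T_{x_1}^{\epsilon_1}\, T_{x_2}^{\epsilon_2} \cdots T_{x_{3g-1}}^{\epsilon_{3g-1}},
\]
where $(x_1, \ldots, x_{3g-1}) = (c_1, \ldots, c_{g-1}, a_1, b_1, \ldots, a_g, b_g)$ and $\epsilon_r \in \{\pm 1\}$ is $-1$ for the $c$- and $a$-curves and $+1$ for the $b$-curves. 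An admissible tuple has exactly $(g-1) + 2g = 3g-1$ exponents, so for each $r \in \{1, \ldots, 3g-1\}$ I would let $\T^{(r)}$ denote the tuple obtained from $\T^{(0)}$ by incrementing its $r$-th exponent by one. Writing $h_{\T^{(0)}} = U_r \cdot T_{x_r}^{\epsilon_r} \cdot V_r$ with $U_r := T_{x_1}^{\epsilon_1} \cdots T_{x_{r-1}}^{\epsilon_{r-1}}$, a direct computation yields
\[
h_{\T^{(r)}} \cdot h_{\T^{(0)}}^{-1} \;=\; U_r \cdot T_{x_r}^{\epsilon_r} \cdot U_r^{-1} \;=\; T_{y_r}^{\epsilon_r},
\]
where $y_r := U_r(x_r)$. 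Consequently each $T_{y_r}$ lies in the subgroup $H := \langle h_{\T^{(0)}}, h_{\T^{(1)}}, \ldots, h_{\T^{(3g-1)}} \rangle$.

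To complete the argument, I would show by induction on $r$ that every Lickorish twist $T_{x_r}$ lies in $H$. The base case $T_{x_1} = T_{y_1}$ follows from $U_1 = \mathrm{id}$. For the inductive step, assuming $T_{x_1}, \ldots, T_{x_{r-1}} \in H$, we have $U_r \in H$ and therefore
\[
T_{x_r}^{\epsilon_r} \;=\; U_r^{-1}\, T_{y_r}^{\epsilon_r}\, U_r \;\in\; H,
\]
which forces $T_{x_r} \in H$ since $\epsilon_r = \pm 1$. Once every element of $\mathcal{L}_g$ lies in $H$, Lickorish's theorem forces $H = \Mod(S_g)$, proving the claim. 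I do not foresee a serious obstacle in this argument; the only point of care is to fix a consistent enumeration of the $3g-1$ exponents that matches the left-to-right order of the Dehn twists in $h_\T$, so that the telescoping identity for $h_{\T^{(r)}} h_{\T^{(0)}}^{-1}$ holds exactly as stated.
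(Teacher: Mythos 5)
Your proof is correct and follows essentially the same route as the paper: part (i) applies Penner's criterion to the Lickorish filling system $\{a_j\}\cup\{b_j\}\cup\{c_i\}$, and part (ii) uses the all-ones tuple together with the $3g-1$ singly-incremented tuples, extracting each Lickorish twist by the same telescoping-and-induction argument. Your enumeration of the exponents in the actual left-to-right order of the factors of $h_\T$ is, if anything, slightly more careful than the paper's relabeling.
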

\begin{proof}
Consider the multicurves $\mathcal{C}=\{c_1,\cdots,c_{g-1},a_1,a_2,\cdots,a_g\}$ and $\mathcal{D}=\{b_1,b_2,\cdots,b_g\}$ in $S_g$. Since the collection $\C\cup \D$ fills $S_g$, (i) follows directly from Theorem~\ref{thm:pA_recipe}. 

To show (ii), we first relabel the curves in $\C \cup \D$ as
$$(c_1,\cdots,c_{g-1},a_1,\cdots,a_g, b_1,\ldots,b_g) = (\alpha_1,\ldots\alpha_{3g-1}),$$ and write
$\displaystyle h_{\T} = \prod_{i=1}^{3g-1} T_{\alpha_i}^{\delta_i(\T)}.$
Then $T_{\alpha_1} = h_{\T_{\alpha_1}}^{-1}h_{\T_1}$, where
$$\T_1 = ((1,\ldots,1),((1,1),\ldots,(1,1)) \text{ and } \T_{\alpha_1} = (2,1,\ldots,1),((1,1),\ldots,(1,1)).$$ 
Proceeding inductively, we see that for $k>1$
$$T_{\alpha_k}^{\delta_k} = \left( \prod_{i=1}^{k-1}T_{\alpha_i}^{\delta_{i}} \right)^{-1} h_{\T_{\alpha_k}}^{-1} \left( \prod_{i=1}^{k-1}T_{\alpha_i} \right) h_{\T_1},$$ where $\delta_i = \delta_i(\T_1)$ and $\T_{\alpha_k}$ is the admissible tuple obtained by replacing the $k^{th}$ appearance of $1$ in $\T_1$ with $2$.  Since the set of $3g-1$ Dehn twists $\{T_\alpha: \alpha \in \C \cup \D\}$ generate $\Mod(S_g)$, it follows that $\{h_{\T_{\alpha_k}} : 1 \leq k \leq 3g-1\} \cup \{\T_1\}$ generates $\Mod(S_g)$.
\end{proof}

\begin{remark}
Let $P(A)$ denote the characteristic polynomial of a square matrix $A$. A direct computation reveals that $P(\Psi(h_{\T})) = (x-1)P(M_{h_{\T}})$, where $M_{h_{\T}}$ is the Perron matrix of $h_{\T}$ (as in Theorem~\ref{thm:pA_recipe}). Hence, the homological dilatation of $h_{\T}$ equals its stretch factor. Consequently, by a result of Silver~\cite{SW}, it follows that  $h_{\T}$ induces  orientable foliations on $S_g$.
\end{remark}

\noindent Using our symplectic criterion for liftability (Theorem~\ref{thm:symp_lift_crit}), we will now derive a necessary and sufficient condition under which a pseudo-Anosov map of type $h_{\T}$ would lie in $\LMod_{p_k}(S_g)$.

\begin{proposition}
For $g \geq 2$,  let $h_{\T} \in \Mod(S_g)$ be a pseudo-Anosov mapping class as in Definition~\ref{def:pa_family}. Then $h_{\T} \in \LMod_{p_k}(S_g)$ if and only if $k \mid \ell_1$. 
\end{proposition}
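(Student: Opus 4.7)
The plan is to apply the symplectic criterion of Theorem~\ref{thm:symp_lift_crit}(ii): $h_\T \in \LMod_{p_k}(S_g)$ if and only if the second row $(d_{21},\ldots,d_{2(2g)})$ of $\Psi(h_\T)$ satisfies $k \mid d_{2i}$ for all $i \neq 2$ and $\gcd(d_{22},k)=1$. The entire argument thus reduces to computing this second row.

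The central observation is a decoupling of the transvection formulas. With the sign convention implicit in the matrices $\Psi_k(T_{a_2}),\Psi_k(T_{b_2}),\Psi_k(T_{c_1})$ displayed in the proof of Theorem~\ref{thm:lmod_gen_set}, a left-handed Dehn twist about a simple closed curve $c$ acts on $H_1(S_g,\Z)$ by $x \mapsto x + \omega(c,x)[c]$, where $\omega$ is the algebraic intersection pairing. Every curve appearing in $h_\T$ other than the $b_j$'s has homology class in $\mathrm{span}(a_1,\ldots,a_g)$: indeed $[a_j]=a_j$ and $[c_i]=a_i - a_{i+1}$. Hence each $T_{a_j}^{\pm 1}$ and each $T_{c_i}^{\pm 1}$ fixes every $b$-coordinate, while each $T_{b_j}^{\pm 1}$ alters only the $b_j$-coordinate. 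Consequently, in the word
\[ h_\T = T_{c_1}^{-p_1}\cdots T_{c_{g-1}}^{-p_{g-1}} \cdot T_{a_1}^{-k_1}T_{b_1}^{\ell_1} \cdot T_{a_2}^{-k_2}T_{b_2}^{\ell_2}\cdots T_{a_g}^{-k_g}T_{b_g}^{\ell_g}, \]
the only factor capable of altering the $b_1$-coordinate of any vector is the central block $T_{b_1}^{\ell_1}$.

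To exploit this, I fix an arbitrary $x \in H_1(S_g,\Z)$ and track its $b_1$-coordinate through the composition, applied right-to-left. The right block $T_{a_g}^{-k_g}T_{b_g}^{\ell_g}\cdots T_{a_2}^{-k_2}T_{b_2}^{\ell_2}$ carries $x$ to some $y$ whose $a_1$- and $b_1$-coordinates both equal those of $x$, since no factor in this block involves $a_1$, $b_1$, or any $c_i$. Applying $T_{b_1}^{\ell_1}$ then yields $y + \ell_1\,\omega(b_1,y)\,b_1 = y - \ell_1 x_{a_1}\,b_1$, so the new $b_1$-coordinate is $x_{b_1} - \ell_1 x_{a_1}$. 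The remaining factors $T_{a_1}^{-k_1}$ and $T_{c_i}^{-p_i}$ modify only $a$-coordinates, so
\[ (h_{\T*}(x))_{b_1} = x_{b_1} - \ell_1 x_{a_1}, \]
and evaluating at the basis vectors $e_1,\ldots,e_{2g}$ shows that the second row of $\Psi(h_\T)$ is $(-\ell_1,\,1,\,0,\ldots,0)$.

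The criterion is now immediate: $d_{22}=1$ is automatically coprime to $k$, and $d_{2i}=0$ for $i\geq 3$ is automatically divisible by $k$, so Theorem~\ref{thm:symp_lift_crit}(ii) collapses to the single requirement $k \mid d_{21} = -\ell_1$, that is, $k \mid \ell_1$. The only potential difficulty is keeping the sign conventions for the Dehn-twist action, the intersection pairing, and the homology classes $[c_i]$ mutually consistent; but all of these can be calibrated directly against the explicit matrices in the proof of Theorem~\ref{thm:lmod_gen_set}, so this is bookkeeping rather than a genuine obstacle.
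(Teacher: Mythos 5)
Your proof is correct and follows essentially the same route as the paper: both reduce the statement to the second row of $\Psi(h_\T)$ and invoke the symplectic criterion of Theorem~\ref{thm:symp_lift_crit}(ii). The only differences are in execution: you compute that row by a transvection/support argument rather than the paper's explicit block-matrix multiplication, and your full-row computation $(-\ell_1,1,0,\ldots,0)$ delivers both implications at once, whereas the paper handles the ``if'' direction separately via Corollary~\ref{cor:lmod_gen_set2} and extracts only $d_{21}$ (recorded there as $\ell_1$; the sign discrepancy is immaterial for divisibility) for the ``only if'' direction.
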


\begin{proof}
Suppose that $k \mid \ell_1$. Then by Corollary~\ref{cor:lmod_gen_set2}, $h_{\T} \in \LMod_{p_k}(S_g)$. Let $O_m$ and $I_m$, respectively, denote the $m \times m$ zero matrix and the $m\times m$ identity matrix. For showing the converse, we first note that for $1 \leq i \leq g$, the matrices $\Psi(T_{a_i}^{-1})$ and $\Psi(T_{b_i})$ in $\Sp(2g,\Z)$, respectively, have the following $g \times g$ block matrix structure (comprising $2 \times 2$ blocks):
\[
\small
\Psi(T_{a_i}) = \begin{bmatrix}
I_{2}&O_{2}&\cdots&\cdots&\cdots & O_{2}\\
\ddots\\
O_{2}&O_{2}&\cdots& A &\cdots & O_{2}\\
\ddots\\
O_{2}&O_{2}&\cdots&\cdots&\cdots & I_{2}
\end{bmatrix} \text{ and } 
\Psi(T_{b_i}^{-1}) =\begin{bmatrix}
I_{2}&O_{2}&\cdots&\cdots&\cdots & O_{2}\\
\ddots\\
O_{2}&O_{2}&\cdots& B &\cdots & O_{2}\\
\ddots\\
O_{2}&O_{2}&\cdots&\cdots&\cdots & I_{2}
\end{bmatrix},
\]
where the $A=\begin{bmatrix}1&1\\0&1 \end{bmatrix}$, and $B=\begin{bmatrix}1&0\\1&1 \end{bmatrix}$ are $(ii)^{th}$ blocks of the corresponding matrices. Thus, we have 
\[\small
\Psi(T_{a_i}^{-k_j}T_{b_i}^{\ell_j})= \begin{bmatrix}
I_{2}&O_{2}&\cdots&\cdots&\cdots & O_{2}\\
\ddots\\
O_{2}&O_{2}&\cdots& X_{k_j,\ell_j} &\cdots & O_{2}\\
\ddots\\
O_{2}&O_{2}&\cdots&\cdots&\cdots & I_{2}
\end{bmatrix},
\]
where  the $(ii)^{th}$ block $X_{k_j,\ell_j} = \begin{bmatrix}1+k_j\ell_j&k_j\\\ell_j&1 \end{bmatrix}$. Moreover, $\Psi(T_{c_i}^{-1})$, for $1\leq i\leq g-1$, is the block matrix
\[ \small
\Psi(T_{c_i}^{-1}) = \begin{bmatrix}
I_{2}&O_{2}&\cdots&\cdots&\cdots & \cdots& O_{2}\\
\ddots\\
O_{2}&O_{2}& \cdots&P&R &\cdots& O_{2}\\
O_{2}&O_{2}& \cdots&R&P &\cdots& O_{2}\\
\ddots\\
O_{2}&O_{2}&\cdots&\cdots&\cdots & \cdots& I_{2}\\
\end{bmatrix},
\]
whose $(ii)^{th}$ and $((i+1)(i+1))^{th}$ blocks are $P=\begin{bmatrix}1&1\\0&1 \end{bmatrix}$, and whose $(i(i+1))^{th}$ and $((i+1)i)^{th}$  $i\times i$ blocks are $R=\begin{bmatrix}0&-1\\0&0 \end{bmatrix}$. Consequently, we have $\Psi(T_{c_i}^{p_i})$ is the block matrix
\[\small
\Psi(T_{c_i}^{p_i}) =\begin{bmatrix}
I_{2}&O_{2}&\cdots&\cdots&\cdots & \cdots& O_{2}\\
\ddots\\
O_{2}&O_{2}& \cdots&P_{p_i}&R_{p_i} &\cdots& O_{2}\\
O_{2}&O_{2}& \cdots&R_{p_i}&P_{p_i} &\cdots& O_{2}\\
\ddots\\
O_{2}&O_{2}&\cdots&\cdots&\cdots & \cdots& I_{2}\\
\end{bmatrix},
\]
whose $(ii)^{th}$ and $((i+1)(i+1))^{th}$ blocks are $P_{p_i}=\begin{bmatrix}1&p_i\\0&1 \end{bmatrix}$, and whose $(i(i+1))^{th}$ and $((i+1)i)^{th}$  $i\times i$ blocks are $R_{p_i}=\begin{bmatrix}0&-p_i\\0&0 \end{bmatrix}$. Further, we note that $P_{p_i}^2=P_{2p_i}$, $R_{p_i}^2=O_2$, $P_{p_i}R_{p_i}=R_{p_i}$, and $R_{p_i}P_{p_i}=R_{p_i}$. Thus, we see  that $\Psi(h_{\T}) = (d_{ij})_{2g \times 2g}$, where $d_{21} = \ell_1$. Hence, by Theorem~\ref{thm:symp_lift_crit}, it follows $k \mid \ell_1$.
\end{proof}

\section*{Acknowledgements} The authors would like to thank Prof. Dan Margalit for inspiring some of the key ideas in this paper, Dr. Karam Deo Shankhadhar for providing pertinent references from the theory of modular forms, and Pankaj Kapari for pointing out some typos in an earlier version of the paper.

\bibliographystyle{plain}
\bibliography{LModSg}

\end{document}